\newtheorem{theorem}{Theorem}
\newtheorem{definition}[theorem]{Definition}
\newtheorem{proposition}[theorem]{Proposition}
\newtheorem{lemma}[theorem]{Lemma}
\newtheorem{corollary}[theorem]{Corollary}
\newtheorem{remark}[theorem]{Remark}
\numberwithin{equation}{section}
\numberwithin{theorem}{section}
\title{An example of higher-dimensional Heegaard Floer homology}
\author{Yin Tian}
\address{School of Mathematical Sciences, Beijing Normal University; 
Laboratory of Mathematics and Complex Systems, Ministry of Education, Beijing 100875, China}
\email{yintian@bnu.edu.cn} \urladdr{}
\author{Tianyu Yuan}
\address{Beijing International Center for Mathematical Research, Peking University, Beijing 100871, China}
\email{ytymath@pku.edu.cn} \urladdr{}
\date{\today}
\keywords{Higher-dimensional Heegaard Floer homology, Hecke algebra}
\subjclass[2010]{Primary 53D10; Secondary 53D40.}
\begin{document}

\maketitle

\begin{abstract}
We count pseudoholomorphic curves in the higher-dimensional Heegaard Floer homology of disjoint cotangent fibers of a two dimensional disk. We show that the resulting algebra is isomorphic to the Hecke algebra associated to the symmetric group. 
\end{abstract}

\section{Introduction}
Many topological properties of a manifold $M$ can be recovered from the symplectic geometry of its cotangent bundle $T^*M$. 
An example is the $A_\infty$-equivalence between the wrapped Floer homology $CW^*(T^*_qM)$ of a cotangent fiber and the space $C_{-*}(\Omega_qM)$ of chains on the based loop space of $M$, proved by Abbondandolo, Schwarz \cite{abbondandolo2010floer} and Abouzaid \cite{abouzaid2012wrapped}. 

On the symplectic side, there is a generalization, the wrapped Floer homology $CW^*(\sqcup_{i=1}^{\kappa} T^*_{q_i}M)$ of $\kappa$ disjoint cotangent fibers in the framework of {\em higher-dimensional Heegaard Floer homology} (abbreviated HDHF) established by Colin, Honda and Tian \cite{colin2020applications}.
It is related to the braid group of $M$ on the topological side.

When $M=\Sigma$ is a real oriented surface, the HDHF was recently studied by Honda, Tian and Yuan \cite{honda2022higher}.  
Pick $\kappa$ basepoints $\mathbf{q}=\{q_1,\dots,q_\kappa\}\subset \Sigma$. 
By definition, $CW^*(\sqcup_{i} T^*_{q_i}\Sigma)$ is an $A_{\infty}$ algebra over $\mathbb{Z}[[\hbar]]$, where $\hbar$ keeps track of the Euler characteristic of the holomorphic
curves that are counted in the definition of the $A_{\infty}$-operations. 
If $\Sigma$ is not a two sphere, then $CW^*(\sqcup_{i} T^*_{q_i}\Sigma)$ is supported in degree zero and hence is an ordinary algebra.
The main result of \cite{honda2022higher} is that the algebra $CW^*(\sqcup_{i} T^*_{q_i}\Sigma)$ is isomorphic to the {\em braid skein algebra} $\operatorname{BSk}_{\kappa}(\Sigma)$ of $\Sigma$, which was defined by Morton and Samuelson \cite{morton2021dahas}. 
Roughly speaking, $\operatorname{BSk}_{\kappa}(\Sigma)$ is a quotient of the group algebra of the braid group of $\Sigma$ by the {\em HOMFLY skein relation} which is expressed in terms of $\hbar$. 
The skein relation has an explanation as holomorphic curve counting due to Ekholm and Shende \cite{ekholm2021skeins}. 
This is one of the keys to build the bridge between $CW^*(\sqcup_{i} T^*_{q_i}\Sigma)$ and $\operatorname{BSk}_{\kappa}(\Sigma)$.

Morton and Samuelson showed that $\operatorname{BSk}_{\kappa}(\Sigma)$ is isomorphic to the {\em double affine Hecke algebra} associated to $\mathfrak{gl}_{\kappa}$ when $\Sigma$ is a torus. 
Based on this result, Honda, Tian and Yuan proved the isomorphisms between $CW^*(\sqcup_{i} T^*_{q_i}\Sigma)$ and various Hecke algebras of type A for $\Sigma$ being a disk, a cylinder or a torus. 

In this paper, we focus on the local case: $\Sigma=D^2$ is a disk.
Let $\operatorname{End}(L^{\otimes \kappa})$ denote the algebra $CW^*(\sqcup_{i} T^*_{q_i}D^2)$ throughout the paper.
It is isomorphic to the finite Hecke algebra associated to the symmetric group $S_{\kappa}$ over $\mathbb{Z}[[\hbar]]$ \cite{honda2022higher}.   
The main result of this paper is to show that $\operatorname{End}(L^{\otimes \kappa})$ can be defined over $\mathbb{Z}[\hbar]$, and the isomorphism to the finite Hecke algebra still holds.


By definition, the Floer generators of $\operatorname{End}(L^{\otimes \kappa})$ are tuples of intersection points between the cotangent fibers $T^*_{q_i}D^2$. 
They are in one-to-one correspondence to elements of the symmetric group $S_{\kappa}$. 
Let $T_w \in \operatorname{End}(L^{\otimes \kappa})$ denote the corresponding Floer generator for $w \in S_{\kappa}$.

We now recall some basic facts about the Hecke algebra associated to $S_{\kappa}$.  
For our purpose, we change the variable from $q$ to $\hbar$ via $\hbar=q-q^{-1}$. 

\begin{definition}
    The Hecke algebra $H_\kappa$ is a unital $\mathbb{Z}[\hbar]$-algebra generated by $\tilde{T}_1,\dots,\tilde{T}_{\kappa-1}$, with relations
    \begin{align*}
        \tilde{T}_{i}^2&=1+\hbar \tilde{T}_i,\\
        \tilde{T}_i\tilde{T}_j&=\tilde{T}_j\tilde{T}_i\,\,\,\mathrm{for}\,\,\,|i-j|>1,\\
        \tilde{T}_i\tilde{T}_{i+1}\tilde{T}_i&=\tilde{T}_{i+1}\tilde{T}_{i}\tilde{T}_{i+1}.
    \end{align*}
\end{definition}

It is known that the Hecke algebra $H_\kappa$ is a free $\mathbb{Z}[\hbar]$-module with a basis $\tilde{T}_w, w\in S_{\kappa}$, called the {\em standard basis}. 
Here, $\tilde{T}_i=\tilde{T}_{s_i}$ for the transposition $s_i=(i,i+1)$. 
There is a length function on $S_{\kappa}$ defined by $l(w)=\min\{l~|~w=s_{i_1}\cdots s_{i_l}\}$. 
The basis $\tilde{T}_w=\tilde{T}_{i_1}\cdots \tilde{T}_{i_l}$ if $w=s_{i_1}\cdots s_{i_l}$ is an expression of minimal length. 
Moreover, the algebra structure on $H_\kappa$ is uniquely determined by
     \begin{align} \label{eq-hecke}
        \tilde{T}_i\tilde{T}_w=\left\{
        \begin{array}{ll}
        \tilde{T}_{s_iw} & \mathrm{if}\,\,\,l(s_iw)>l(w)+1\\
               \tilde{T}_{s_iw}+\hbar\tilde{T}_w & \mathrm{if}\,\,\,l(s_iw)<l(w)-1
    \end{array}\right.
    \end{align}

Our main result is the following. 

\begin{theorem} \label{main-thm}
The HDHF homology $\operatorname{End}(L^{\otimes \kappa})$ is defined over $\mathbb{Z}[\hbar]$. 
Moreover, there is an isomorphism of unital algberas $\phi: H_\kappa \to \operatorname{End}(L^{\otimes \kappa})$ such that $\phi(\tilde{T}_w)=T_w$ for $w \in S_{\kappa}$.
\end{theorem}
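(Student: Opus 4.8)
The plan is to define $\phi$ on generators by $\phi(\tilde{T}_i) = T_i$ and to verify that the Floer generators $T_i \in \operatorname{End}(L^{\otimes\kappa})$ satisfy the quadratic, far-commutation, and braid relations defining $H_\kappa$; once these hold, the assignment extends uniquely to a unital algebra homomorphism. Granting the multiplication formula \eqref{eq-hecke} for the Floer product (verified below), writing $w=s_{i_1}\cdots s_{i_l}$ in reduced form and applying the length-increasing case repeatedly yields $\phi(\tilde{T}_w)=T_{i_1}\cdots T_{i_l}=T_w$. Finally, since $\{\tilde{T}_w\}_{w\in S_\kappa}$ is a $\mathbb{Z}[\hbar]$-basis of $H_\kappa$ while $\{T_w\}_{w\in S_\kappa}$ is the set of Floer generators, hence a $\mathbb{Z}[\hbar]$-basis of $\operatorname{End}(L^{\otimes\kappa})$ once the latter is known to be defined over $\mathbb{Z}[\hbar]$, the homomorphism $\phi$ carries one basis bijectively onto the other and is therefore an isomorphism of unital $\mathbb{Z}[\hbar]$-algebras.

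The heart of the argument is the direct computation of the product $\mu^2$ on pairs of generators by pseudoholomorphic curve counting, and in particular the verification of \eqref{eq-hecke}. Concretely, I would compute $T_i\cdot T_w$ by classifying the rigid holomorphic curves with the two inputs $T_i,T_w$ and a single output, each contribution weighted by a power of $\hbar$ recording minus the Euler characteristic of its domain. When $l(s_iw)>l(w)$ the only rigid curve is the expected triangle, giving $T_{s_iw}$ with weight $\hbar^0$; when $l(s_iw)<l(w)$ there are in addition curves realizing the HOMFLY resolution of the resulting double point, contributing $T_{s_iw}+\hbar T_w$. The quadratic relation $T_i^2=1+\hbar T_i$ is the special case $w=s_i$ and is the cleanest test of the $\hbar$-weighting. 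The far-commutation relation $T_iT_j=T_jT_i$ for $|i-j|>1$ follows because the two transpositions have disjoint support, so contributing curves factor as products over disjoint regions of the base; the braid relation $T_iT_{i+1}T_i=T_{i+1}T_iT_{i+1}$ follows from an isotopy of curve configurations of Reidemeister-III type together with the matching of curve counts on the two sides.

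The main obstacle, and the genuinely new content relative to \cite{honda2022higher}, is the \emph{polynomiality}: a priori each structure constant is a power series in $\hbar$, and I must show that only finitely many curves, of Euler characteristic bounded from below, contribute to each product. The plan is to classify all rigid contributing curves explicitly, exploiting the simple geometry of $T^*D^2$: after projecting a contributing curve to the base disk $D^2$ and using positivity of intersections together with the open mapping principle, the possible domains are severely constrained, and one shows that curves of sufficiently negative Euler characteristic cannot be rigid (their expected dimension exceeds the rigid value) or else cancel in pairs. This bounds the $\hbar$-degree of every structure constant, placing the product in $\mathbb{Z}[\hbar]$ and yielding the relation $T_i^2=1+\hbar T_i$ exactly rather than up to higher-order corrections. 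I expect the hardest technical point to be precisely this classification of the rigid moduli spaces and the proof that no high-genus or multiply-punctured curves survive, since controlling such contributions is exactly what distinguishes the polynomial statement from the power-series one.
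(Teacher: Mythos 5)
Your overall skeleton---verify the Hecke relations and the multiplication rule \eqref{eq-hecke} for the Floer generators $T_i$, then conclude that $\phi$ is an isomorphism because it carries the standard basis $\{\tilde T_w\}$ bijectively onto the basis $\{T_w\}$---coincides with the paper's final assembly (Corollaries \ref{cor2} and \ref{cor3} feeding into the proof of Theorem \ref{main-thm}). The gap is in the part you yourself identify as the heart of the matter: the mechanism you propose for polynomiality does not work. You argue that curves of sufficiently negative Euler characteristic ``cannot be rigid (their expected dimension exceeds the rigid value) or else cancel in pairs.'' In HDHF the first claim is false: by the Riemann--Roch computation in the paper, $\dim\mathcal{M}(\dot F)=3(\kappa-\chi)$, while each of the three directional constraints $\mathcal{M}(D_3)$, $\mathcal{M}(T^*I_1)$, $\mathcal{M}(T^*I_2)$ has codimension $\kappa-\chi$, so the cut-out moduli space has expected dimension $0$ for \emph{every} $\chi$. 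This is precisely why the operation \eqref{m_2} sums index-zero counts over all $\chi\le\kappa$ with weight $\hbar^{\kappa-\chi}$, and why HDHF is a priori defined only over $\mathbb{Z}[[\hbar]]$: no index count can rule out low-$\chi$ contributions. The alternative ``cancel in pairs'' is unsubstantiated, and one should not expect any soft argument to succeed: the paper fixes one particular wrapping of the fibers and explicitly states that it does not know whether finiteness in $\hbar$ holds for a general wrapping, so a correct proof must use the specific geometry of $\mathcal{L}_0,\mathcal{L}_1,\mathcal{L}_2$, which your argument (positivity of intersections and the open mapping principle in the base) never does.

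What the paper actually does is geometric degeneration plus induction. One lets the length $d$ of the segment of $L_{1\kappa}$ in the $I_1$-direction shrink to zero and applies Gromov compactness; the analysis of the resulting limit (a triangle component $\dot F'_{(12)}$ bubbling off, degree arguments for the projections $\pi_{T^*I_2}\circ u$ and $\pi_{D_3}\circ u$, orientation constraints on boundary arcs) shows case by case that the moduli spaces with $\chi$ below the expected value are empty. The general $\kappa$ is handled by Propositions \ref{prop-key1} and \ref{prop-key2}, which strip off the last one or two strands of $w_1$ and reduce every product $T_{w_1}\cdot T_{w_2}$ to fewer strands, ultimately to the $\kappa=2$ case, where the single nontrivial count $\#\mathcal{M}^{\chi=1}_{J}(T_1,T_1,T_1)=1$ is established by an explicit intersection argument inside the compactified moduli space of domains (Lemma \ref{lemma TT}). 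Your outline of the quadratic, far-commutation, and braid relations presupposes exactly the classification of rigid curves that this machinery supplies; without it, statements such as ``the only rigid curve is the expected triangle'' and ``there are in addition curves realizing the HOMFLY resolution'' are assertions rather than proofs.
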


Our strategy of the proof is a direct computation of HDHF. It is different from the method in \cite{honda2022higher}. 
For the convenience of curve counting, we view the disk $D^2$ as a product of two intervals $I_1\times I_2$ so that $T^*D^2=T^*I_1\times T^*I_2\subset \mathbb{C}\times\mathbb{C}$. 
Lagrangians (wrapped cotangent fibers) are products of 1-dimensional Lagrangians in $T^*I_1$ and $T^*I_2$.
Counting curves in $T^*D^2$ is then reduced to counting curves in $\mathbb{C}$, which is easier to work with.

\vspace{.2cm}
\noindent {\em Further directions:}

It is natural to ask whether the HDHF homology $\operatorname{End}(\sqcup_{i} T^*_{q_i}\Sigma)$ of disjoint fibers of $T^*\Sigma$ can be defined over $\mathbb{Z}[\hbar]$ for a general surface $\Sigma$. 
It is possible to generalize our local result to the global case by using some sheaf theoretic technique, for instance in \cite{ganatra2018sectorial}.  

It is also interesting to explain the geometric meaning of the change of variables $\hbar=q-q^{-1}$. Note that the {\em canonical basis} of the Hecke algebra is defined over $\mathbb{Z}[q,q^{-1}]$. We will express the canonical basis via HDHF in an upcoming paper.

\vspace{.2cm}
\noindent\textit{Acknowledgements}. The authors thank Ko Honda for numerous ideas and suggestions. 
YT is supported by NSFC 11971256.

\section{Preliminaries}
\label{section-lag}
We first specify the ambient manifold and Lagrangian submanifolds of interest. 
For convenience of curve counting, we set $D^2=I_1\times I_2$ with $I_1=I_2=[0,1]$, which is topologically the same as the unit disk. 
Let $X=T^*D^2=T^*I_1\times T^*I_2$ be the total space of the cotangent bundle of $D^2$.

Consider the canonical Liouville form $\theta$ on $T^*D^2$, which induces a contact manifold structure at the infinity of $(T^*D^2,\theta)$.
For a Lagrangian $L\subset{T^*D^2}$, denote its boundary at infinity by $\partial_\infty L$.
An isotopy of Lagrangians $L_t$ in $T^*D^2$ is called \emph{positive} if $\alpha(\partial_t\partial_{\infty} L_t)>0$ for all $t$.
Let $T^*_vD^2=T^*D^2|_{\partial D^2}$ be the vertical boundary of $T^*D^2$ over $\partial D^2$. 
We require that any isotopy $L_t$ cannot cross $T_v^*D^2$.
A positive isotopy is also called a ``partially wrapping''.
For the details of partially wrapped Fukaya categories, we refer to \cite{sylvan2019partially} by Sylvan and \cite{ganatra2020covariantly} by Ganatra, Shende, and Pardon. 

We next consider the generalization to wrapped HDHF.
Pick $\kappa$ disjoint basepoints $\mathbf{q}=\{q_1,\dots,q_\kappa\} \subset D^2\backslash \partial D^2$ and consider the $\kappa$ cotangent fibers $L_{0i}=T^*_{q_i}D$ for $i=1,\dots,\kappa$. 
We denote $\mathcal{L}_{0}=\{L_{01},\dots,L_{0\kappa}\}$.
An isotopy of $\kappa$-tuple of Lagrangians $\mathcal{L}_t=\{L_{t1},\dots,L_{t\kappa}\}$ is called {\em positive} if $\alpha(\partial_t\partial_\infty L_{ti})>0$ for all $i=1,\dots,\kappa$ and all $t$. 
For a pair of $\kappa$-tuples of Lagrangians $\mathcal{A}$ and $\mathcal{B}$, we denote $\mathcal{A}\rightsquigarrow\mathcal{B}$ if there is an positive isotopy from $\mathcal{A}$ to $\mathcal{B}$.

We then perform positive wrapping on $\mathcal{L}_0$ to get $\mathcal{L}_j=\{L_{j1},\dots,L_{j\kappa}\}$ for $j=1,2$. 
Specifically, we put $\mathcal{L}_0,\mathcal{L}_1,\mathcal{L}_2$ in the position as in Figure \ref{figure-X} (b,c), which represent the $T^*I_1$-direction and $T^*I_2$-direction, respectively. 
It is easy to check that
    $$\mathcal{L}_0\rightsquigarrow \mathcal{L}_1\rightsquigarrow \mathcal{L}_2.$$

\begin{remark}
    We fix this special wrapping of $\mathcal{L}_0,\mathcal{L}_1,\mathcal{L}_2$. It is crucial for our counting of curves. 
    We do not know whether the finite generation over $\hbar$ still holds for a general wrapping.
\end{remark}

The HDHF cochain complex $CW^*(\mathcal{L}_i,\mathcal{L}_j)$, $i<j$ is defined as the free abelian group generated by $\kappa$-tuples of intersection points between $\mathcal{L}_i$ and $\mathcal{L}_j$ over $\mathbb{Z}[[\hbar]]$. 
By definition, $CW^*(\mathcal{L}_i,\mathcal{L}_j)$ is an $A_\infty$-algebra.
We refer the reader to \cite{honda2022higher} for details of the definition of HDHF in this case.
 
There is an absolute grading on $CW^*(\mathcal{L}_i,\mathcal{L}_j)$ and the degree is supported at $0$ by \cite[Proposition 2.9]{honda2022higher}. Hence, $CW^*(\mathcal{L}_i,\mathcal{L}_j)$ is an ordinary algebra over $\mathbb{Z}[[\hbar]]$. 
We denote it by $\operatorname{End}(L^{\otimes\kappa})$.

We describe the $m_2$-composition map of $\operatorname{End}(L^{\otimes\kappa})$ in the following. 
We set $\widehat{X}=D_3\times X$ as the target manifold, where $D_3$ is the unit disk with 3 boundary punctures and referred as the ``$A_\infty$ base direction''.
Let $z_0,z_1,z_2$ be the boundary punctures of $D_3$ and let $\alpha_0,\alpha_1,\alpha_2$ be the boundary arcs.
We extend $\mathcal{L}_i$ to the $D_3$ direction by setting $\widehat{\mathcal{L}}_i=\alpha_i\times \mathcal{L}_i$, $\widehat{L}_{ij}=\alpha_i\times L_{ij}$, $i=0,1,2$, $j=1,\dots,\kappa$, which are Lagrangian submanifolds of $\widehat{X}$.

\begin{figure}
     \centering
     \includegraphics[width=13cm]{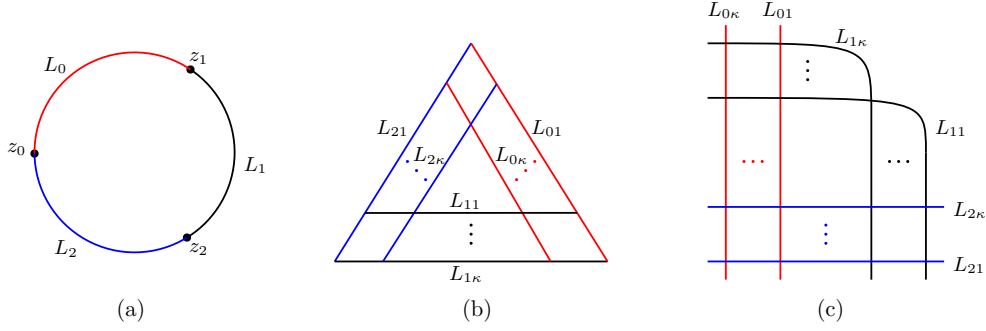}
     \caption{(a) $D_3$, the $A_\infty$ base direction; (b) The Lagrangians in the $T^*I_1$ direction; (c) The Lagrangians in the $T^*I_2$ direction.}
    \label{figure-X}
\end{figure}

For $i=0,1,2$, let $\boldsymbol{y}_i=\{y_{i1},\dots,y_{i\kappa}\}$ be a tuple of intersection points $y_{ij}\in \widehat{L}_{(i-1)j}\cap \widehat{L}_{ij'}$, where $\{1',\dots,\kappa'\}$ is some permutation of $\{1,\dots,\kappa\}$.
Let $J$ be a small generic perturbation of $J_{D_3}\times J_1\times J_2$, where $J_{D_3},J_1,J_2$ are the standard complex structures on $D_3,T^*I_1, T^*I_2$, viewed as subsets of $\mathbb{C}$.
Let $\mathcal{M}_{J}(\boldsymbol{y}_1,\boldsymbol{y}_2,\boldsymbol{y}_0)$ be the moduli space of maps
\begin{equation}
    u:(\dot F,j)\to(\widehat{X},J),
\end{equation}
where $(F,j)$ is a compact Riemann surface with boundary, $\boldsymbol{p}_i$ are disjoint tuples of boundary punctures of $F$ for $i=0,1,2$, and $\dot F=F\backslash\cup_i \boldsymbol{p}_i$. The map $u$ satisfies
\begin{align}
    \label{floer-condition}
    \left\{
        \begin{array}{ll}
            \text{$du\circ j=J\circ du$;}\\
            \text{each component of $\partial \dot F$ is mapped to a unique $\widehat{L}_{ij}$;}\\
            \text{$\pi_X\circ u$ tends to $\boldsymbol{y}_i$ as $s_i\to+\infty$ for $i=1,\dots,m$;}\\
            \text{$\pi_X\circ u$ tends to $\boldsymbol{y}_0$ as $s_0\to-\infty$;}\\
            \text{$\pi_{D_3}\circ u$ is a $\kappa$-fold branched cover of $D_3$,}
        \end{array}
    \right.
\end{align}
where the 3rd condition means that $\pi_X\circ u$ maps the neighborhoods of the punctures of $\boldsymbol{p}_i$ asymptotically to the Reeb chords of $\boldsymbol{y}_i$ for $i=1,\dots,m$ at the positive ends. The 4th condition is similar. 

The $m_2$-composition map of $\operatorname{End}(L^{\otimes\kappa})$ is then defined as
\begin{equation}
\label{m_2}
    m_2(\boldsymbol{y_1},\boldsymbol{y_2})=\sum_{\boldsymbol{y_0},\chi\leq\kappa}\#\mathcal{M}^{\mathrm{ind}=0,\chi}_{J}(\boldsymbol{y_1},\boldsymbol{y_2},\boldsymbol{y_0})\cdot\hbar^{\kappa-\chi}\cdot\boldsymbol{y_0}.
\end{equation}
where the superscript ``$\mathrm{ind}$'' denotes the Fredholm index and ``$\chi$'' denotes the Euler characteristic of $F$;  the symbol $\#$ denotes the signed count of the corresponding moduli space.


It is known that a choice of spin structures on the Lagrangians determines a canonical orientation of the moduli space. 
The Lagrangian in our case is the cotangent fiber which is topologically $\mathbb{R}^2$. So there is a unique spin structure.   
We omit the details about the orientation, and refer the reader to \cite[Section 3]{colin2020applications}.

\section{The case of $\kappa=2$}
\label{section-2}

In this section we compute $\operatorname{End}(L^{\otimes 2})$ as a model case. 
The general case will be discussed in Section \ref{section-3}.

For $0 \le i < j \le 2$, there are two Floer generators of $CF^*(\mathcal{L}_i,\mathcal{L}_j)$: $T_{{\operatorname{id}}}$ and $T_1$, where $T_{{\operatorname{id}}}=(q_1,q_2)$ with $q_1\in L_{i1}\cap L_{j1}$ and $q_2\in L_{i2}\cap L_{j2}$, and $T_1=(q_1,q_2)$ with $q_1\in L_{i1}\cap L_{j2}$ and $q_2\in L_{i2}\cap L_{j1}$. 
The main result of this section is the following.

\begin{proposition}
The multiplication on $\operatorname{End}(L^{\otimes 2})$ is given by
\begin{align*}
T_{\operatorname{id}}\cdot T_{\operatorname{id}}&=T_{\operatorname{id}}; \\
T_{\operatorname{id}}\cdot T_{1}&=T_{1}; \\
 T_{1} \cdot T_{\operatorname{id}}&=T_{1}; \\
T_1 \cdot T_1&=1+\hbar T_1.
\end{align*}
Hence, Theorem \ref{main-thm} holds for $\kappa=2$.
    \label{prop-2}
\end{proposition}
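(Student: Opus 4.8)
The plan is to compute each of the four products directly by enumerating the rigid pseudoholomorphic curves contributing to $m_2$, exploiting the product structure $\widehat{X} = D_3 \times T^*I_1 \times T^*I_2 \subset D_3 \times \mathbb{C} \times \mathbb{C}$. Since the Lagrangians $\widehat{L}_{ij} = \alpha_i \times L_{ij}$ split as products of $1$-dimensional Lagrangians in $T^*I_1$ and $T^*I_2$, and $J$ is a small generic perturbation of the split complex structure $J_{D_3} \times J_1 \times J_2$, I would first show that the index-$0$ curves are governed by their three projections: a branched cover $\pi_{D_3} \circ u$ of $D_3$ together with two $1$-dimensional holomorphic curves $\pi_1 \circ u$ and $\pi_2 \circ u$ in $\mathbb{C}$ with boundary on the configurations drawn in Figure \ref{figure-X}(b,c). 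This reduces the entire computation to counting curves in $\mathbb{C}$, which is elementary.

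The key bookkeeping device is the branched cover $\pi_{D_3} \circ u : \dot F \to D_3$, which for $\kappa = 2$ is a double cover. Because $D_3$ is contractible with $\chi(D_3) = 1$, Riemann--Hurwitz gives $\chi(F) = 2 - b$ where $b$ is the number of interior branch points; hence the weight $\hbar^{\kappa - \chi} = \hbar^{b}$ records exactly the branching. I would then observe that the Fredholm index constraint $\mathrm{ind} = 0$ bounds $b$, so only finitely many powers of $\hbar$ can occur; this is precisely what forces the answer to lie in $\mathbb{Z}[\hbar]$ rather than $\mathbb{Z}[[\hbar]]$.

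With this framework, I would treat the four products in turn. For $T_{\operatorname{id}} \cdot T_{\operatorname{id}}$, $T_{\operatorname{id}} \cdot T_1$, and $T_1 \cdot T_{\operatorname{id}}$, the boundary matching on the two $\mathbb{C}$-factors should admit a single unbranched ($b = 0$) rigid curve, giving the stated outputs $T_{\operatorname{id}}$, $T_1$, $T_1$ each with coefficient $\hbar^0 = 1$. The product $T_1 \cdot T_1$ is the substantive case: since $s_1 s_1 = \operatorname{id}$, I would search for curves with output $T_{\operatorname{id}}$ and for curves with output $T_1$. I expect exactly one unbranched curve contributing $T_{\operatorname{id}}$ (the ``$1$'') and exactly one curve with a single branch point, whence $\chi = 1$ and a factor $\hbar^1$, contributing $\hbar T_1$; all remaining configurations must be excluded on index or boundary-matching grounds.

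The main obstacle is twofold. First, one must rigorously justify the reduction to product curves: that for a small generic perturbation of the split $J$ the relevant index-$0$ moduli spaces are in bijection with tuples of the projected data, and that transversality and compactness hold. Second, and more delicate, is confirming the exact coefficients in $T_1 \cdot T_1$ --- verifying that the branch-point curve is genuinely rigid with $\mathrm{ind} = 0$, that its signed count is $+1$ (using the unique spin structure on the fibers, which are diffeomorphic to $\mathbb{R}^2$), and that no curve with $b \ge 2$ contributes. Once the four relations are established, they coincide with the defining relations of $H_2 = \mathbb{Z}[\hbar]\langle \tilde{T}_1 \mid \tilde{T}_1^2 = 1 + \hbar \tilde{T}_1 \rangle$, so $\phi(\tilde{T}_w) = T_w$ is the desired algebra isomorphism and $\operatorname{End}(L^{\otimes 2})$ is visibly defined over $\mathbb{Z}[\hbar]$, proving Theorem \ref{main-thm} for $\kappa = 2$.
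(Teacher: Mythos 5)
Your overall framework---splitting $\widehat{X}$ as $D_3\times T^*I_1\times T^*I_2$, studying a curve through its three projections, and using Riemann--Hurwitz so that $\hbar^{\kappa-\chi}=\hbar^{b}$ records interior branch points---is exactly the setup the paper uses, and your identification of the trivial $\chi=2$ curves for the first three products and for the ``$1$'' in $T_1\cdot T_1$ is correct. However, there is a genuine gap at the step you lean on hardest: the claim that ``the Fredholm index constraint $\mathrm{ind}=0$ bounds $b$, so only finitely many powers of $\hbar$ can occur.'' This is false in complex dimension $2$. The relevant dimension counts are $\dim\mathcal{M}(\dot F)=3(\kappa-\chi)$ and $\dim\mathcal{M}(D_3)=\dim\mathcal{M}(T^*I_1)=\dim\mathcal{M}(T^*I_2)=2(\kappa-\chi)$, so the triple intersection has expected dimension $0$ for \emph{every} $\chi$; index considerations alone never rule out curves with many branch points (this is precisely why HDHF is a priori defined over $\mathbb{Z}[[\hbar]]$, and why polynomiality in $\hbar$ is the substantive content of the main theorem rather than a formality). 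The paper excludes the low-$\chi$ moduli spaces by a geometric degeneration argument: shrink the length $d$ of the bottom Lagrangian segment in the $T^*I_1$-direction, apply Gromov compactness, show that a triangle component bubbles off at the degenerate pair of intersection points, and derive a contradiction (a component forced to map to two different punctures of $D_3$, a projection forced to have negative local degree, etc.). Some replacement for this mechanism is needed; without it the vanishing of $\#\mathcal{M}^{\chi<2}$ in the first three products, of $\#\mathcal{M}^{\chi<2}(T_1,T_1,T_{\operatorname{id}})$, and of $\#\mathcal{M}^{\chi<1}(T_1,T_1,T_1)$ is unproved.

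The second, smaller gap is the signed count of the $\chi=1$ curve in $T_1\cdot T_1$: you state that you ``expect'' exactly one such curve with count $+1$ but give no method to verify this. The paper's argument is an explicit intersection computation inside the compactified moduli space $\overline{\mathcal{M}}(D_3)\cong\mathbb{R}^2$ of involutive double covers of $D_3$ by a hexagon: one locates the two boundary points of $\mathcal{M}(D_3)\cap\mathcal{M}(T^*I_1)$ and the two boundary points of $\mathcal{M}(D_3)\cap\mathcal{M}(T^*I_2)$ on the boundary hexagon and checks that the two arcs they bound must cross algebraically once. Your appeal to the unique spin structure handles orientations but does not by itself produce the count; you would need either this intersection-theoretic argument or some other explicit identification of the branched curve to close the proof.
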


The proof of this proposition occupies the rest of the section. 
We directly compute the moduli spaces. 
There are trivial curves with $\chi=2$ accounting for the $\hbar^0$ terms in the multiplication. 
We show that $\mathcal{M}^{\chi<2}_J(\boldsymbol{y}_1,\boldsymbol{y}_2,\boldsymbol{y}_0)=\emptyset$ for almost all cases except that $\mathcal{M}^{\chi=1}_J(T_1,T_1,T_1) \neq \emptyset$ accounting for the $\hbar^1$ term in $T_1\cdot T_1$.  
The main strategy to prove the nonexistence of curves is to stretch the Lagrangians in the $T^*I_1$-direction and apply the Gromov compactness.

For later use, we make the following conventions: 
\begin{itemize}
    \item We denote the length of the line segment of $L_{1\kappa}$ in the $I_1$-direction by $d$, see Figure \ref{figure-X}(b);
    \item For $q\in X$, we denote its projection in the $T^*I_1$ (resp. $T^*I_2$)-direction by $q'$ (resp. $q''$).
    \item We denote the line segment between $q_1$ and $q_2$ by $(q_1q_2)$.
    \item When plotting figures, we denote the intersections $q_i$ by $i$.
    \item When taking the limit, we denote the degenerated domain by $\dot{F}'$ and its irreducible component containing $\{p_1,p_2,\dots\}$ by $\dot{F}'_{(12\dots)}$.
    
\end{itemize}

\begin{lemma} \label{lemma 11=1}
    $T_{\operatorname{id}}\cdot T_{\operatorname{id}}=T_{\operatorname{id}}$.
\end{lemma}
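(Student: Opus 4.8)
The plan is to evaluate the $m_2$-count in \eqref{m_2} by organizing the moduli spaces according to the Euler characteristic $\chi$ of the domain $F$, which by Riemann--Hurwitz for the branched cover $\pi_{D_3}\circ u$ of $D_3$ equals $\kappa - b = 2 - b$, where $b$ is the number of branch points. Only $\chi = 2$ (so $b = 0$) and $\chi = 1$ (so $b = 1$) can contribute, giving the $\hbar^0$ and $\hbar^1$ terms respectively. First I would treat the top stratum $\chi = 2$: here the cover is unbranched, so $\dot F$ is a disjoint union of two copies of $\dot D_3$ and $u$ splits as a pair of single-strand holomorphic triangles, one over each of the two strands. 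Since both inputs are $T_{\operatorname{id}}$, i.e.\ carry no transposition, each strand's triangle has its two incoming corners on the aligned intersection points, forcing the outgoing corner to be aligned as well; hence the only possible output is $T_{\operatorname{id}}$, and no $\chi = 2$ curve can output $T_1$. Each single-strand factor is the standard rigid triangle (the $\kappa = 1$ unit), counted with sign $+1$, so the two disjoint triangles assemble to a unique rigid configuration contributing $+T_{\operatorname{id}}$.

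It then remains to kill the $\hbar^1$ term, i.e.\ to show $\mathcal{M}^{\operatorname{ind}=0,\chi=1}_J(T_{\operatorname{id}},T_{\operatorname{id}},\boldsymbol{y}_0)=\emptyset$ for every $\boldsymbol{y}_0$. For $\chi = 1$ the cover is connected with a single simple branch point, so $u$ genuinely mixes the two strands. Following the strategy announced above, I would stretch the Lagrangians in the $T^*I_1$-direction, letting the length parameter $d$ of Figure \ref{figure-X}(b) grow, and extract a Gromov limit of a hypothetical sequence of such curves. Projecting $u$ to $T^*I_1 = \mathbb{C}$ gives a holomorphic map with boundary on the one-dimensional arcs, and the corners of $T_{\operatorname{id}}$ project to a crossingless configuration of intersection points; positivity of area together with the open mapping theorem then severely constrains the projected components of the limit. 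The aim is to show that the branch point of the connected cover cannot be placed consistently: every way of degenerating forces either a projected bigon or triangle in $\mathbb{C}$ with the wrong boundary orientation, or a broken configuration $\dot F'$ whose component indices do not sum to $0$, yielding a contradiction and hence emptiness of the moduli space.

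The main obstacle I anticipate is the bookkeeping in the Gromov limit: one must enumerate all broken or nodal configurations $\dot F'$ compatible with the index-$0$, $\chi = 1$ constraint, track the asymptotics of each irreducible component $\dot F'_{(12)}$ in both the $T^*I_1$- and $T^*I_2$-projections, and locate the branch point in the limit. Controlling where the branch point degenerates while simultaneously matching the boundary conditions on the stretched arcs and the fixed $T_{\operatorname{id}}$ asymptotics is the delicate point; the area and energy estimates coming from the $\mathbb{C}$-projections are the main tool for ruling the limits out. Once the $\chi = 1$ moduli space is shown empty, combining with the $\chi = 2$ analysis gives $T_{\operatorname{id}}\cdot T_{\operatorname{id}} = T_{\operatorname{id}}$ with no higher $\hbar$ corrections, as claimed.
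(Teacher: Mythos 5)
Your $\chi=2$ analysis is fine and agrees with the paper (the unbranched double cover of the simply connected $D_3$ is trivial, so the curve splits into two standard rigid triangles, and alignment of the input corners forces the output $T_{\operatorname{id}}$). But the actual content of the lemma is the vanishing of all lower-$\chi$ counts, and there your proposal has two genuine gaps. First, the claim that ``only $\chi=2$ and $\chi=1$ can contribute'' is unjustified: Riemann--Hurwitz gives $\chi=2-b$ with $b\geq 0$ arbitrary, and the index-$0$ condition does not exclude $b\geq 2$. Indeed, by the paper's own dimension count (proof of Lemma \ref{lemma TT}), $\dim\mathcal{M}(\dot F)=3(\kappa-\chi)$ while each of the three projections cuts out a subvariety of codimension $\kappa-\chi$, so the expected dimension of $\mathcal{M}_J$ is $0$ for \emph{every} $\chi$; this is why the weight $\hbar^{\kappa-\chi}$ is there and why the ring is a priori $\mathbb{Z}[[\hbar]]$. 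You must rule out all $\chi<2$, not just $\chi=1$; the paper's argument does so uniformly.

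Second, even for $\chi=1$ you never actually produce the contradiction: the second and third paragraphs announce an aim (``the aim is to show that the branch point\dots cannot be placed consistently'') and then list the enumeration of limit configurations as an anticipated obstacle, which is precisely the step that constitutes the proof. The paper's mechanism is concrete and different in detail from what you sketch: it degenerates by $d\to 0$ (colliding $q_1'$ and $q_2'$ in the $T^*I_1$-projection, the opposite of your proposal to let $d$ grow), so that a triangle $\dot F'_{(12)}$ with vertices $\{p_1,p_2,p_a\}$ bubbles off. Then a local degree argument for $\pi_{T^*I_2}\circ u$ near $q_3''$ (degree $0$ or $1$) forces $\dot F'_{(12)}\cup\dot F'_{(3)}$ to be a connected component of $\dot F'$, hence the original curve was a disjoint union of two triangles and $\chi=2$, a contradiction; and for the output $T_1$ a separate argument is needed, in which the preimage of the dashed region is constant at $z_0$ under $\pi_{D_3}\circ u$, contradicting $\pi_{D_3}\circ u(q_5')=z_2$. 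Your outline (wrong-orientation bigons, indices not summing to zero, energy estimates) names plausible tools but proves none of these steps, so as written the $\hbar$-divisible part of $T_{\operatorname{id}}\cdot T_{\operatorname{id}}$ is not controlled.
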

\begin{proof}
    We first show that
    \begin{equation}
        \#\mathcal{M}^{\chi}_{J}(T_{\operatorname{id}},T_{\operatorname{id}},T_{\operatorname{id}})=
        \left\{
            \begin{array}{lr}
            1,\,\,\chi=2 &  \\
            0,\,\,\chi<2 &  
            \end{array}.
        \right.
    \end{equation}
   The Floer generators are shown in Figure \ref{fig-1-1-1}.
    
    \begin{figure}
    \centering
    \includegraphics[width=11cm]{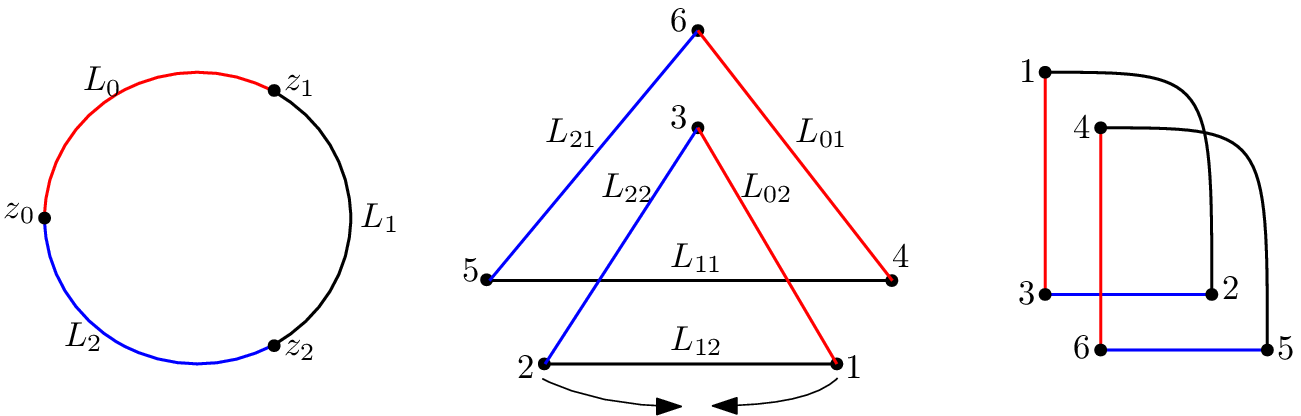}
    \caption{Generators for $\mathcal{M}_{J}(T_{\operatorname{id}},T_{\operatorname{id}},T_{\operatorname{id}})$.}
    \label{fig-1-1-1}
\end{figure}
    
    If $\chi=2$, there is a unique trivial holomorphic curve consisting of two disks. So $\#\mathcal{M}^{\chi=2}_{J}(T_{\operatorname{id}},T_{\operatorname{id}},T_{\operatorname{id}})=1$. 
    
    If $\chi<2$, let $d\to0$, i.e., let $q'_1$ and $q'_2$ get closer. 
    In the limit, since there is no slit or branch points separating $q'_1$ and $q'_2$, $\dot F'_{(12)}$ bubbles off as a triangle with vertices $\{p_1,p_2,p_a\}$, where $\{p_a\}$ is a boundary nodal point.
    The projection of $\dot F'_{(12)}$ under $\pi_{T^*I_2}\circ u$ is a homeomorphism to the triangle with vertices $\{q''_1,q''_2,q''_3\}$. Hence the projection of $\dot F'_{(3)}$ under $\pi_{T^*I_2}\circ u$ is a constant map to $q''_3$.
    Since $\pi_{T^*I_2}\circ u$ is of degree 0 or 1 near $q''_3$, the image $\pi_{T^*I_2}\circ u(\dot{F}\backslash(\dot{F}'_{(12)}\cup \dot{F}'_{(3)}))$ is disjoint from $q''_3$. It follows that $\dot{F}'_{(12)}\cup \dot{F}'_{(3)}$ is a connected component of $\dot{F}'$.
    Therefore, $\dot{F}$ consists of two components before the degeneration, which are homeomorphically mapped to the triangles $\{q''_1,q''_2,q''_3\}$ and $\{q''_4,q''_5,q''_6\}$ under $\pi_{T^*I_2}\circ u$, respectively. 
    So $\chi=2$, which is a contradiction.  We conclude that $ \#\mathcal{M}^{\chi}_{J}(T_{\operatorname{id}},T_{\operatorname{id}},T_{\operatorname{id}})=0$ if $\chi<2$.
    
    \begin{figure}
        \centering
        \includegraphics[width=11cm]{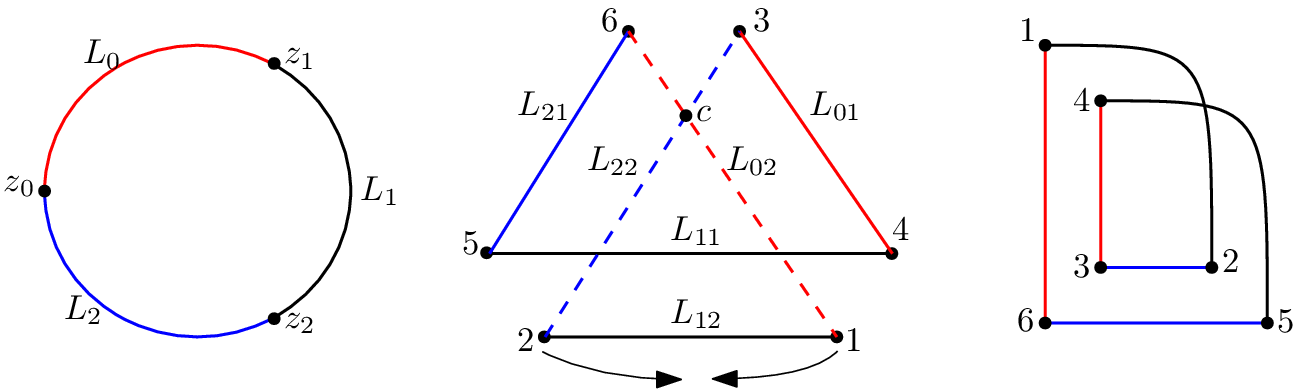}
        \caption{Generators for $\mathcal{M}_{J}(T_{\operatorname{id}},T_{\operatorname{id}},T_1)$.}
        \label{fig-1-1-x}
    \end{figure}
    

    We next show that $\#\mathcal{M}_{J}(T_{\operatorname{id}},T_{\operatorname{id}},T_1)=0$. The generators are shown in Figure \ref{fig-1-1-x}. 
    As $d\to0$, $\dot{F}'_{(12)}$ bubbles off as a triangle with vertices $\{p_1,p_2,p_a\}$, where $p_a\in\dot{F}'$ is a nodal point.
    Denote the union of irreducible components of $\dot{F}'$ containing the preimage of the dashed lines in the $T^*I_1$-direction by $\dot{F}'_{\text{dash}}$.
    Since $p_3$, $p_6$, and $p_a$ are mapped to $z_0$ under $\pi_{D_3}\circ u$ in the limit, the preimages of the dashed lines are also mapped to $z_0$. Hence, $\dot{F}'_{\text{dash}}$ is mapped to the constant point $z_0$ under $\pi_{D_3}\circ u$.
    Since $(q'_5q'_6)$ cannot be separated by slits, we have $q'_5\in\dot{F}'_{\text{dash}}$ and $\pi_{D_3}\circ u(q'_5)=z_0$.
    This contradicts with the fact that $\pi_{D_3}\circ u(q'_5)=z_2$.
    Therefore, $\mathcal{M}_{J}(T_{\operatorname{id}},T_{\operatorname{id}},T_1)=\emptyset$.
    \end{proof}
    

\begin{lemma} \label{lemma 1T=T}
    $T_{\operatorname{id}}\cdot T_1=T_1$.
    \label{lemma-1-x}
\end{lemma}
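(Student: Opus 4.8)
The plan is to mirror the curve-counting strategy of Lemma \ref{lemma 11=1}, now tracking the single nontrivial permutation $T_1$ through the composition. I must establish two facts: first, that $\#\mathcal{M}^{\chi=2}_J(T_{\operatorname{id}},T_1,T_1)=1$ accounting for the $\hbar^0$ term $T_1$; and second, that $\#\mathcal{M}^{\chi}_J(T_{\operatorname{id}},T_1,\boldsymbol{y}_0)=0$ for $\chi<2$ and for all output generators $\boldsymbol{y}_0$ (including $\boldsymbol{y}_0=T_{\operatorname{id}}$, which must contribute nothing since $T_{\operatorname{id}}\cdot T_1$ has no identity component). For the $\chi=2$ count, the two Lagrangian sheets interact as a trivial product of disks: since the incoming generator $T_{\operatorname{id}}$ is the diagonal intersection pattern and $T_1$ is the transposition pattern, composing them geometrically yields exactly one rigid trivial bigon-pair projecting homeomorphically to the relevant triangles in the $T^*I_2$-direction, giving output $T_1$.

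For the vanishing statements with $\chi<2$, I would reuse the degeneration argument: first draw the Floer generators in the $T^*I_1$ and $T^*I_2$ directions, then take the limit $d\to 0$, forcing the component $\dot{F}'_{(12)}$ carrying the punctures $p_1,p_2$ to bubble off as a triangle with vertices $\{p_1,p_2,p_a\}$ for a boundary nodal point $p_a$. The key projection bookkeeping is to follow where the boundary arcs map under $\pi_{D_3}\circ u$ and where the slit-separated segments sit in the $T^*I_2$-direction. As in the previous lemma, I expect that either the projection $\pi_{T^*I_2}\circ u$ is forced to be a homeomorphism onto disjoint triangles (contradicting $\chi<2$), or the component $\dot{F}'_{\text{dash}}$ collapses onto a single boundary puncture of $D_3$ in a way incompatible with the prescribed asymptotic labels of the generators, exactly as in the contradiction $\pi_{D_3}\circ u(q'_5)=z_0$ versus $z_2$.

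To rule out the wrong output $\boldsymbol{y}_0=T_{\operatorname{id}}$ entirely (at every $\chi$), I would argue topologically: the permutation labels must compose correctly. A tuple of intersection points in $\mathcal{M}_J(T_{\operatorname{id}},T_1,\boldsymbol{y}_0)$ realizes a factorization of permutations, and since $\operatorname{id}\cdot s_1 = s_1 \neq \operatorname{id}$, any curve with output $T_{\operatorname{id}}$ would violate the matching of endpoints dictated by which sheets $L_{ij}$ the boundary components lie on; this follows from the branched-cover condition (the fifth item of \eqref{floer-condition}) together with the incompatibility of the asymptotic Reeb-chord data. Hence such moduli spaces are empty for combinatorial reasons before any analytic input.

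The main obstacle I anticipate is the $\chi<2$ vanishing for the \emph{correct} output $\boldsymbol{y}_0=T_1$: unlike the all-identity case of Lemma \ref{lemma 11=1}, here the nontrivial braiding of $T_1$ means the slits and branch points can genuinely separate the relevant segments, so I cannot immediately conclude that $(q'_iq'_j)$ stays connected. I would need to argue carefully that any lower-$\chi$ configuration would require a branch point whose image under $\pi_{D_3}\circ u$ is incompatible with the degree-one condition, or equivalently that the extra handle/slit needed to drop $\chi$ forces an intersection with the vertical boundary $T^*_vD^2$ that the positivity of the wrapping forbids. Establishing precisely which slit placements are geometrically realizable under the fixed wrapping of $\mathcal{L}_0,\mathcal{L}_1,\mathcal{L}_2$ is where the delicate case analysis will lie.
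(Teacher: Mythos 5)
Your overall scaffolding (trivial two-disk curve for $\chi=2$, degeneration $d\to 0$ and triangle bubbling for $\chi<2$) matches the paper, but there are two genuine defects. The serious one is your claim that $\mathcal{M}_J(T_{\operatorname{id}},T_1,T_{\operatorname{id}})$ is empty ``for combinatorial reasons before any analytic input'' because permutations must compose, $\operatorname{id}\cdot s_1\neq\operatorname{id}$. No such constraint exists in HDHF. Following the boundary arcs of $\dot F$ through the punctures, the sheet labels only force the boundary components of $\dot F$ to follow the cycles of $w_0^{-1}w_2w_1$: a component realizing a nontrivial cycle simply wraps several times around $\partial D_3$, which the branched-cover condition in \eqref{floer-condition} permits. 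This bookkeeping does rule out the trivial $\chi=2$ curve with output $T_{\operatorname{id}}$ (the boundary would have to be connected, so $\chi\le 1$), but it rules out nothing at lower Euler characteristic, which is exactly where the $\hbar$-corrections live. The decisive counterexample is in the paper itself: $\mathcal{M}^{\chi=1}_J(T_1,T_1,T_1)$ has $w_2w_1=\operatorname{id}$ yet output $s_1$, and it is nonempty --- it produces the $\hbar T_1$ term in Lemma \ref{lemma TT}. Your argument, applied uniformly, would also ``prove'' that this moduli space is empty and hence would destroy the Hecke relation. The paper instead excludes $\mathcal{M}^{\chi}_J(T_{\operatorname{id}},T_1,T_{\operatorname{id}})$ for all $\chi\le 2$ geometrically: as $d\to0$ the bubbled triangle $\{p_1,p_2,p_a\}$ would force $p_a$ to map to the intersection of the extensions of $(q''_1q''_6)$ and $(q''_2q''_3)$, where $\pi_{T^*I_2}\circ u$ has degree $0$, a contradiction.

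The second defect is that the vanishing $\#\mathcal{M}^{\chi<2}_J(T_{\operatorname{id}},T_1,T_1)=0$ is left open: you correctly observe that the argument of Lemma \ref{lemma 11=1} does not transfer verbatim because slits can now separate the relevant segments, but you only speculate about which mechanism closes the case. The paper resolves it with a dichotomy on the length of the slit extending $(q'_6q'_5)$ in the $T^*I_1$-direction. If the slit is short, $\dot F'_{(12)}$ bubbles off as a triangle and the argument of Lemma \ref{lemma 11=1} goes through unchanged. If the slit is long, a branch point approaches $\mathcal{L}_2$ in the $D_3$-direction, so in the limit some nodal point $p_n$ of the domain satisfies $\pi_{D_3}\circ u(p_n)\in\alpha_2$, forcing $\pi_{T^*I_2}\circ u(p_n)\in L_{21}\cap L_{22}$; but $L_{21}$ and $L_{22}$ are disjoint in the $T^*I_2$-direction. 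Neither of the mechanisms you propose (violation of a degree-one condition at a branch point, or forbidden crossing of $T^*_vD^2$) is the one that actually works here, so your proof is incomplete precisely at the step you flag as delicate.
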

\begin{proof}
    
    \begin{figure}
        \centering
        \includegraphics[width=11cm]{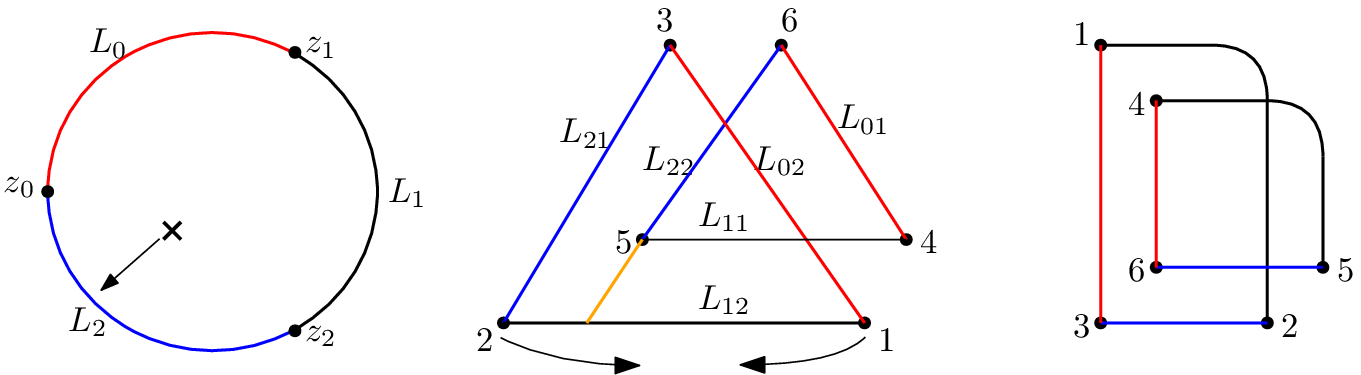}
        \caption{Generators for $\mathcal{M}_{J}(T_{\operatorname{id}},T_1,T_1)$.}
        \label{fig-1-x-x}
    \end{figure}
    
    First we show that
    \begin{equation}
        \#\mathcal{M}^{\chi}_{J}(T_{\operatorname{id}},T_1,T_1)=
        \left\{
            \begin{array}{lr}
            1,\,\,\chi=2 &  \\
            0,\,\,\chi<2 &  
            \end{array}.
        \right.
    \end{equation}
    The generators are shown in Figure \ref{fig-1-x-x}. 
    
    If $\chi=2$, there is a unique trivial holomorphic curve consisting of two disks. So $\#\mathcal{M}^{\chi=2}_{J}(T_{\operatorname{id}},T_1,T_1)=1$.
    
    If $\chi<2$, as $d\to0$, there are two cases: 
    \begin{itemize}
        \item If the orange slit extending $(q'_6q'_5)$ is not long, then $\dot{F}'_{(12)}$ bubbles off as a triangle with vertices $\{p_1,p_2,p_a\}$. 
        The remaining proof is the same as that of $\#\mathcal{M}^{\chi}_{J}(T_{\operatorname{id}},T_{\operatorname{id}},T_{\operatorname{id}})=0$ for $\chi<2$ in Lemma \ref{lemma 11=1}.
        Hence, the limiting curve does not exist.
        \item If the orange slit extending $(q'_6q'_5)$ is long, then there is a branch point approaching the interior of $L_2$ in the $D_3$-direction (as in the left of Figure \ref{fig-1-x-x}). 
        In the limit, the preimage of the branch point on the domain tends to some nodal point $p_n$ so that $\pi_{D_3}\circ u(p_n)\in L_2$. It implies that $\pi_{T^*I_2}\circ u(p_n)\in L_{21}\cap L_{22}$.
        This contradicts with the fact that $L_{21}\cap L_{22}=\emptyset$ in the $T^*I_2$-direction.
    \end{itemize}

    \begin{figure}
        \centering
        \includegraphics[width=11cm]{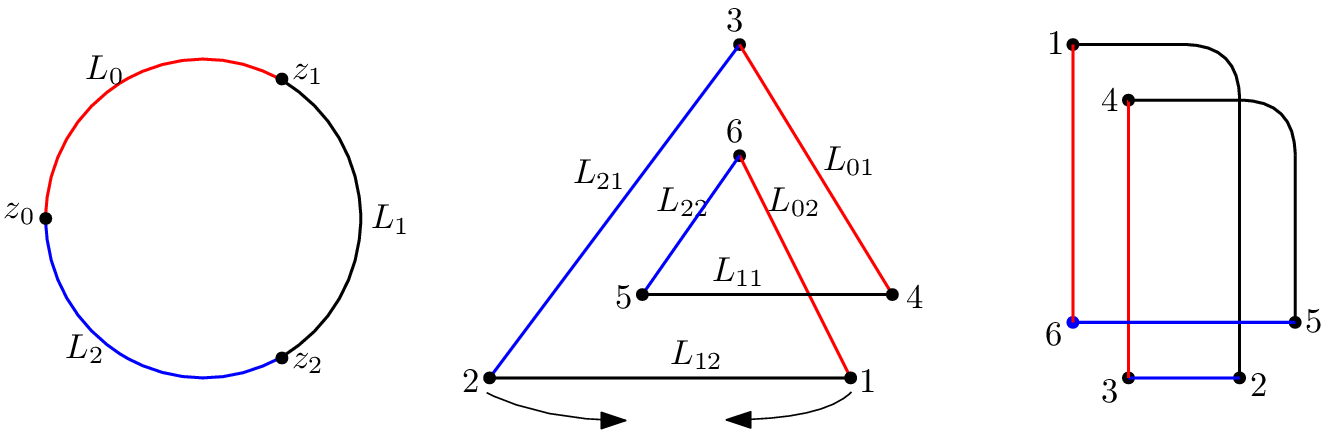}
        \caption{Generators for $\mathcal{M}_{J}(T_{\operatorname{id}},T_1,T_{\operatorname{id}})$.}
        \label{fig-1-x-1}
    \end{figure}

    Next we show that
       $ \#\mathcal{M}^{\chi}_{J}(T_{\operatorname{id}},T_1,T_{\operatorname{id}})=0$,
    for $\chi\leq 2$. 
    The generators are shown in Figure \ref{fig-1-x-1}. 
    As $d\to0$, $\dot F'_{(12)}$ bubbles off as a triangle $\{p_1,p_2,p_a\}$.
    Then $p_a$ should be mapped to the intersection of the extension of the line segments $(q''_1q''_6)$ and $(q''_2q''_3)$. But this is impossible since the degree of the projection $\pi_{T^*I_2}\circ u$ is 0 near the intersection. 
\end{proof}

The similar arguments will be used in the proofs of Propositions \ref{prop-key1} and \ref{prop-key2}. 
In general, $\dot{F}'_{(12)}$ always bubbles off as a triangle as $d\to0$. Here, $q'_1,q'_2$ are on the bottom Lagrangian $L_{1\kappa}$ in the $T^*I_1$-direction.
We then analyze the remaining irreducible components of $\dot{F}'$ and reduce the problem to simpler cases.

\begin{lemma} \label{lemma T1=T}
    $T_1\cdot T_{\operatorname{id}}=T_1$.
\label{lemma-x-1}
\end{lemma}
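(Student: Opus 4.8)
The plan is to compute the two moduli spaces that determine the product, namely $\mathcal{M}_J^{\chi}(T_1,T_{\operatorname{id}},T_1)$ and $\mathcal{M}_J^{\chi}(T_1,T_{\operatorname{id}},T_{\operatorname{id}})$, following the same stretch-and-degenerate strategy (let $d\to 0$ and apply Gromov compactness) used in Lemmas \ref{lemma 11=1} and \ref{lemma 1T=T}. The goal is to show that the only nonzero contribution comes from the unique trivial curve, consisting of two disks, with $\chi=2$ in $\mathcal{M}_J(T_1,T_{\operatorname{id}},T_1)$, which produces the $\hbar^0 T_1$ term; every curve with $\chi<2$, together with every curve whose output is $T_{\operatorname{id}}$, must be ruled out. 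Note that the inputs are now ordered so that $T_1$ records an $\mathcal{L}_0\cap\mathcal{L}_1$ ``swap'' while $T_{\operatorname{id}}$ records an $\mathcal{L}_1\cap\mathcal{L}_2$ ``identity,'' so the geometry, though parallel to Lemma \ref{lemma 1T=T}, is genuinely the mirror configuration and must be read off afresh from Figure \ref{figure-X}.

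For $\mathcal{M}_J^{\chi}(T_1,T_{\operatorname{id}},T_1)$ I would first observe that at $\chi=2$ there is a unique trivial holomorphic curve made of two disks, so the signed count is $1$. For $\chi<2$ I would let $d\to 0$, forcing $\dot{F}'_{(12)}$ to bubble off as a triangle with vertices $\{p_1,p_2,p_a\}$, where $p_a$ is a boundary nodal point. The key is then to analyze $\pi_{T^*I_2}\circ u$: the triangle $\dot{F}'_{(12)}$ maps homeomorphically onto a triangle in $T^*I_2$, which forces the complementary component $\dot{F}'_{(3)}$ to be constant at the opposite vertex; since the degree of $\pi_{T^*I_2}\circ u$ near that vertex is $0$ or $1$, the image of the remainder of the domain is disjoint from it, so $\dot{F}'_{(12)}\cup \dot{F}'_{(3)}$ splits off as a connected component. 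This forces $\dot{F}$ to consist of two triangular components and hence $\chi=2$, contradicting $\chi<2$. Thus $\#\mathcal{M}_J^{\chi<2}(T_1,T_{\operatorname{id}},T_1)=0$.

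For $\mathcal{M}_J^{\chi}(T_1,T_{\operatorname{id}},T_{\operatorname{id}})$ I would again stretch with $d\to 0$ and bubble off the triangle $\{p_1,p_2,p_a\}$, and then derive a contradiction from the required asymptotics of $p_a$. Following the mechanism of Lemma \ref{lemma 1T=T}, the node $p_a$ would have to be sent to the intersection of extensions of two line segments in the $T^*I_2$-direction at which the degree of $\pi_{T^*I_2}\circ u$ vanishes, which is impossible; alternatively, via the $\pi_{D_3}\circ u$-constancy argument of Lemma \ref{lemma 11=1}, a segment forced onto the puncture $z_0$ would clash with the actual asymptotic label of one of the $q'_i$. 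Either route shows the moduli space is empty, so there is no $T_{\operatorname{id}}$ contribution, and combining the two computations gives $T_1\cdot T_{\operatorname{id}}=T_1$.

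The main obstacle will be to determine, from the specific wrapping of Figure \ref{figure-X} and the (to-be-drawn) generator pictures, exactly which projection---$\pi_{T^*I_2}$ or $\pi_{D_3}$---yields the contradiction in the output-$T_{\operatorname{id}}$ case, and to check carefully that the slit/branch-point configuration in the $T^*I_1$-direction forbids \emph{all} nontrivial curves rather than only the obvious ones. Since this lemma is the mirror of Lemma \ref{lemma 1T=T} with the inputs on the arcs $\alpha_1,\alpha_2$ interchanged, the orientation of the two inputs has to be tracked with care; this is precisely the place where $T_1\cdot T_{\operatorname{id}}$ could a priori differ from $T_{\operatorname{id}}\cdot T_1$, and confirming that it does not is the heart of the argument.
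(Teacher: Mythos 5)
Your proposal is correct and takes essentially the same approach as the paper, whose entire proof of this lemma is the one-line remark that it is similar to the proof of Lemma \ref{lemma-1-x}; your write-up is a faithful mirror of that argument (trivial two-disk curve for $\chi=2$, triangle bubbling plus the degree-$0$ obstruction for the $T_{\operatorname{id}}$ output). The only detail to make sure you carry over from Lemma \ref{lemma-1-x} is its second subcase for $\chi<2$ with output $T_1$ --- the long slit forcing a branch point whose nodal limit would have to land on $L_{21}\cap L_{22}=\emptyset$ --- which you acknowledge only implicitly in your closing caveat about checking all slit/branch-point configurations.
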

\begin{proof}
    This is similar to the proof of Lemma \ref{lemma-1-x}.
   \end{proof}

\begin{figure}
    \centering
    \includegraphics[width=13cm]{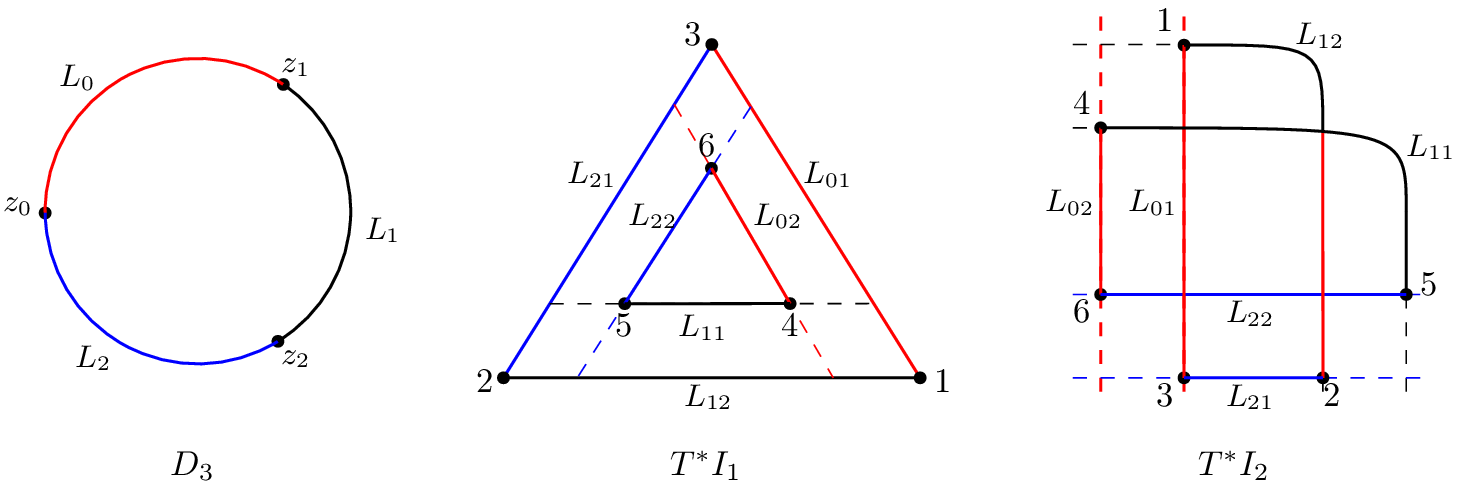}
    \caption{Generators for $\mathcal{M}_{J}(T_1,T_1,T_{\operatorname{id}})$.}
    \label{fig-k-2-chi-0}
\end{figure}


\begin{lemma}     \label{lemma TT}
    $T_1\cdot T_1=T_{\operatorname{id}}+\hbar T_1$.
\end{lemma}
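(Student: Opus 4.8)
The goal is to show that $T_1 \cdot T_1 = T_{\operatorname{id}} + \hbar T_1$, which is the only multiplication producing a nontrivial $\hbar$-term. Following the scheme established in Lemmas \ref{lemma 11=1}--\ref{lemma T1=T}, I would compute $\#\mathcal{M}^{\chi}_J(T_1, T_1, \boldsymbol{y}_0)$ for each possible output $\boldsymbol{y}_0 \in \{T_{\operatorname{id}}, T_1\}$ and each Euler characteristic $\chi \le 2$. The term $T_{\operatorname{id}}$ with coefficient $1$ will come from the trivial $\chi = 2$ curve (two disks) in $\mathcal{M}_J(T_1, T_1, T_{\operatorname{id}})$, so first I would verify that this configuration is geometrically consistent: with $\boldsymbol{y}_1 = \boldsymbol{y}_2 = T_1$ (a swap) the composite permutation is the identity, so the $\chi=2$ trivial curve indeed lands on $T_{\operatorname{id}}$, contributing the leading term, and I would check $\mathcal{M}^{\chi<2}_J(T_1,T_1,T_{\operatorname{id}}) = \emptyset$ by the degeneration argument.

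\textbf{Main computation.} The heart of the proof is showing $\#\mathcal{M}^{\chi=1}_J(T_1, T_1, T_1) = 1$ while $\#\mathcal{M}^{\chi=2}_J(T_1,T_1,T_1)=0$ and $\#\mathcal{M}^{\chi<1}_J(T_1,T_1,T_1) = 0$. For the $\hbar$-coefficient I must exhibit exactly one rigid ($\mathrm{ind}=0$) curve of Euler characteristic $1$. Since $\kappa - \chi = 2 - 1 = 1$, such a curve carries the weight $\hbar^1$, matching the desired relation. Here the domain $\dot F$ is connected (a single component with $\chi = 1$, i.e.\ a disk with one extra puncture or a pair-of-pants type piece rather than two disjoint triangles), reflecting a genuine branched cover of $D_3$ where the two strands interact. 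I would identify this curve explicitly from the $T^*I_1$- and $T^*I_2$-projections in Figure \ref{fig-k-2-chi-0}: the projection to $T^*I_2$ should be a branched double cover or a single immersed region, and the $D_3$-projection should have a simple branch point. Establishing that this curve is rigid and counts with sign $+1$, and that no other $\chi = 1$ curve contributes, is the crucial step.

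\textbf{Ruling out other curves.} As in the earlier lemmas, the technique is to degenerate by sending $d \to 0$, forcing $\dot F'_{(12)}$ to bubble off as a triangle with vertices $\{p_1, p_2, p_a\}$, and then analyze the remaining components via the projections $\pi_{T^*I_2} \circ u$ and $\pi_{D_3} \circ u$. For the $\chi = 2$ case of $\mathcal{M}_J(T_1,T_1,T_1)$ I expect a contradiction coming from the mismatch of permutations: two disjoint triangles would force the output permutation to be $T_{\operatorname{id}}$, not $T_1$, so no trivial curve lands on $T_1$. For $\chi < 1$ the degree and index constraints on the projections (degree $0$ or $1$ of $\pi_{T^*I_2}\circ u$ near the relevant intersection points, and the position of nodal points under $\pi_{D_3}\circ u$) rule out any limiting configuration, exactly as in Lemmas \ref{lemma 11=1} and \ref{lemma-1-x}.

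\textbf{Main obstacle.} The hardest part will be the positive identification and sign computation of the unique $\chi = 1$ curve contributing to $T_1 \cdot T_1$: unlike the vanishing arguments, which proceed by Gromov-compactness contradictions, this requires constructing the curve, verifying it is regular and rigid for generic $J$, confirming there are no other contributions at $\chi = 1$ (in particular that the branch point can be placed in only one way up to the $\mathbb{R}$-translation and reparametrization quotient), and checking that its signed count is $+1$ with respect to the canonical orientation from the unique spin structure on the cotangent fibers. Once the $\chi = 1$ term is pinned down as $\hbar\, T_1$ and all other moduli spaces are shown empty, combining with the $\chi=2$ contribution $T_{\operatorname{id}}$ yields $T_1 \cdot T_1 = T_{\operatorname{id}} + \hbar T_1$, completing Proposition \ref{prop-2}.
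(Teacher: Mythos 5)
Your outline matches the paper's overall architecture — trivial $\chi=2$ curves give the $T_{\operatorname{id}}$ term, the $d\to0$ degeneration rules out the unwanted moduli spaces, and the whole content of the lemma is concentrated in the count $\#\mathcal{M}^{\chi=1}_J(T_1,T_1,T_1)=1$. But precisely at that point your proposal stops being a proof: you correctly flag the identification of the unique $\chi=1$ curve as ``the crucial step'' and ``the hardest part,'' and then you do not carry it out. Announcing that one must ``construct the curve, verify it is regular and rigid for generic $J$, and check the sign'' is a statement of the problem, not a solution; since this single count is the source of the Hecke deformation $\tilde T_i^2 = 1 + \hbar\tilde T_i$, the lemma is unproved without it.

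Moreover, the route you sketch — exhibiting the curve explicitly and checking transversality — is not how the paper does it, and for good reason: producing an explicit pseudoholomorphic map for a perturbed $J$ and certifying uniqueness and regularity is essentially intractable here. The paper instead exploits the product structure $\widehat X = D_3\times T^*I_1\times T^*I_2$ to write $\mathcal{M}_J(T_1,T_1,T_1)=\mathcal{M}(D_3)\cap\mathcal{M}(T^*I_1)\cap\mathcal{M}(T^*I_2)$ inside the moduli space of domains $\mathcal{M}(\dot F)\simeq\mathbb{R}^3$ for $\dot F=D_6$. The involution (deck transformation) condition cuts $\mathcal{M}(D_3)\simeq\mathbb{R}^2$ down to a hexagon after compactification, and the two curves $\mathcal{M}(D_3)\cap\mathcal{M}(T^*I_1)$ and $\mathcal{M}(D_3)\cap\mathcal{M}(T^*I_2)$ are located by finding their boundary points on $\partial\overline{\mathcal{M}}(D_3)$ (via the long-slit/switch-point analysis in each $T^*I_j$-direction); the positions of these endpoints force the two arcs to have algebraic intersection number $1$. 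This converts the transcendental counting problem into a topological intersection computation, which is the idea your proposal is missing. Your secondary argument that a $\chi=2$ curve cannot land on $T_1$ ``because two disjoint triangles force the output $T_{\operatorname{id}}$'' is also looser than the paper's uniform treatment of $\chi\neq 1$, which shows via the degree-$1$ projection to $T^*I_2$ that the domain must be a single disk.
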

\begin{proof}
    We first show that
    \begin{equation}
        \#\mathcal{M}^{\chi}_{J}(T_1,T_1,T_{\operatorname{id}})=
        \left\{
            \begin{array}{lr}
                1,\,\,\chi=2 &  \\
                0,\,\,\chi<2 &  
            \end{array}.
        \right.
    \end{equation}
    The generators are shown in Figure \ref{fig-k-2-chi-0}. 
    
    If $\chi=2$, then there is a unique trivial holomorphic curve consisting of two disks, so $\#\mathcal{M}^{\chi=2}_{J}(T_1,T_1,T_{\operatorname{id}})=1$.
    
    If $\chi<2$, then $\dot F'_{(12)}$ bubbles off as a triangle as $d\to0$. 
    The projection of $\dot F'_{(12)}$ under $\pi_{T^*I_2}\circ u$ is a homeomorphism to the triangle $\{q''_1,q''_2,q''_3\}$.
    Consequently, the projection of $\dot F'_{(3)}$ under $\pi_{T^*I_2}\circ u$ is the constant map to $q''_3$.
    Since $\pi_{T^*I_2}\circ u$ is of degree 0 or 1 near $q''_3$, the image of $\pi_{T^*I_2}\circ u(\dot{F}\backslash(\dot{F}'_{(12)}\cup \dot{F}'_{(3)}))$ is disjoint from $q''_3$. 
    It follows that $\dot{F}'_{(12)}\cup \dot{F}'_{(3)}$ is a connected component of $\dot{F}'$.
    Therefore, $\dot{F}$ consists of two components before the degeneration, which are homeomorphically mapped to the triangles $\{q''_1,q''_2,q''_3\}$ and $\{q''_4,q''_5,q''_6\}$ under $\pi_{T^*I_2}\circ u$, respectively. 
    So $\chi=2$, which is a contradiction.
    ~\\

    \begin{figure}
        \centering
        \includegraphics[width=13cm]{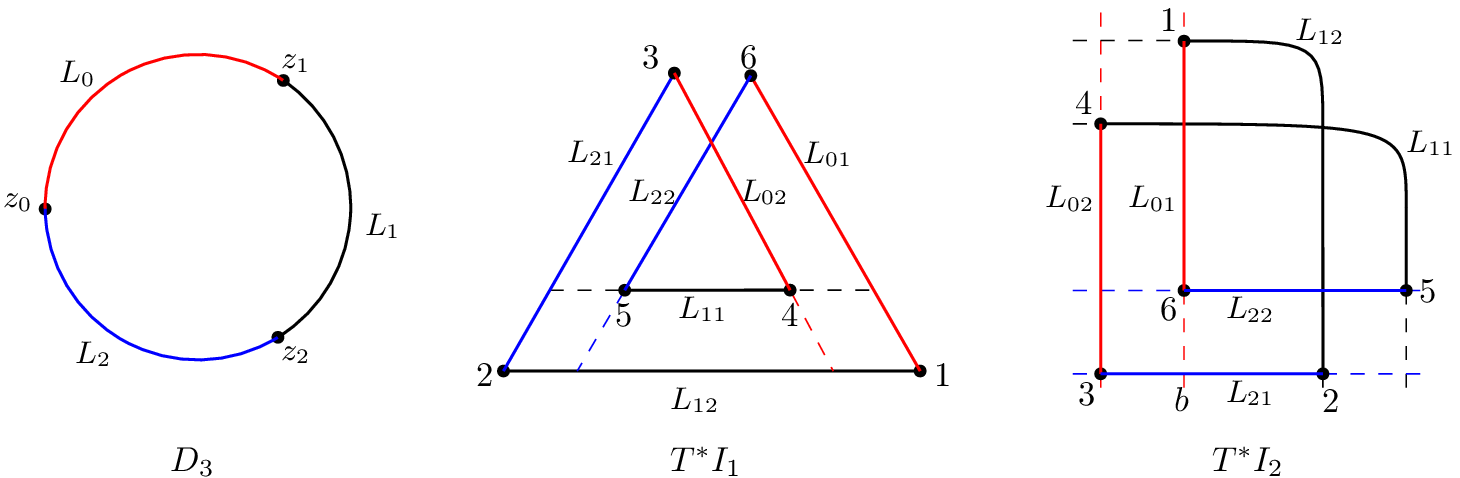}
        \caption{Generators for $\mathcal{M}_{J}(T_1,T_1,T_1)$.}
        \label{fig-curve-1}
    \end{figure}

    \begin{figure}
        \centering
        \includegraphics[height=3cm]{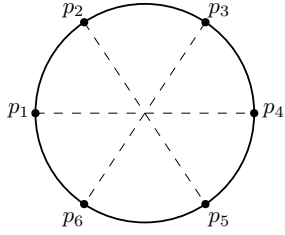}
        \caption{A disk $\dot F=D_6$ which satisfies the involution condition.}
        \label{fig-disk}
    \end{figure}
    
    We next show that
    \begin{equation}
        \#\mathcal{M}^{\chi}_{J}(T_1,T_1,T_1)=
        \left\{
            \begin{array}{lr}
            1,\,\,\chi=1 &  \\
            0,\,\,\chi\neq1 &  
            \end{array}.
        \right.
    \end{equation}
    The generators are shown in Figure \ref{fig-curve-1}. 
    
    We denote the moduli space of domain $(\dot{F},j)$ by $\mathcal{M}(\dot{F})$. By Riemann-Roch formula, $\mathrm{dim}\,\mathcal{M}(\dot{F})=3(\kappa-\chi)$. 
    Consider the moduli space of pseudoholomorphic maps from $(\dot{F},j)$ to each direction $D_3$, $T^*I_1$ and $T^*I_2$, denoted by $\mathcal{M}(D_3)$, $\mathcal{M}({T^*I_1})$ and $\mathcal{M}({T^*I_2})$, respectively. The index formula says
    \begin{equation*}
        \mathrm{dim}\,\mathcal{M}(D_3)=\mathrm{dim}\,\mathcal{M}({T^*I_1})=\mathrm{dim}\,\mathcal{M}({T^*I_2})=2(\kappa-\chi),
    \end{equation*}
    for generic $J$. 
We have
    \begin{equation}
        \mathcal{M}_{J}(T_1,T_1,T_1)=\mathcal{M}(D_3)\cap\mathcal{M}({T^*I_1})\cap\mathcal{M}({T^*I_2}),
    \end{equation}
This is our main strategy to count curves in $\mathcal{M}_{J}(T_1,T_1,T_1)$: we compute the moduli space for each direction and then count their intersection number.
    
    The moduli space of curves restricted to each direction has an explicit parametrization. For example, $\pi_{D_3}\circ u$ from $\dot{F}$ to $D_3$ is a $\kappa$-fold branched cover, and its restriction to $\partial \dot{F}$ is a $\kappa$-fold cover over $S^1$. 
    Generically, $\pi_{D_3}\circ u$ is parametrized by the positions of $\kappa-\chi$ double branch points on $\dot{F}$ over $D_3$.

    In the case $\kappa=2$ and $\chi=1$, $\dot F=D_6$ is a disk with 6 boundary punctures. 
    The moduli space of $(\dot F,j)$ is 
    \begin{equation*}
        \mathcal{M}(\dot{F})\simeq \mathbb{R}^3.
    \end{equation*}
    Then we consider the cut-out moduli space $\mathcal{M}(D_3)$, viewed as a subset of $\mathcal{M}(\dot{F})$. 
    The deck transformation of $\pi_{D_3}\circ u$ imposes an involution condition on $\dot F$. In other words, we require that $\{p_i,p_{i+3}\}$ lie on a diameter for $i=1,2,3$ after some fractional linear transformation. 
    Therefore,
    \begin{equation*}
        \mathcal{M}(D_3)\simeq \mathbb{R}^2.
    \end{equation*}
The moduli spacce $\mathcal{M}(D_3)$ admits a compactification $\overline{\mathcal{M}}(D_3)$, which is described in Figure \ref{fig-disk-moduli}.
    
    \begin{figure}
        \centering
        \includegraphics[height=7cm]{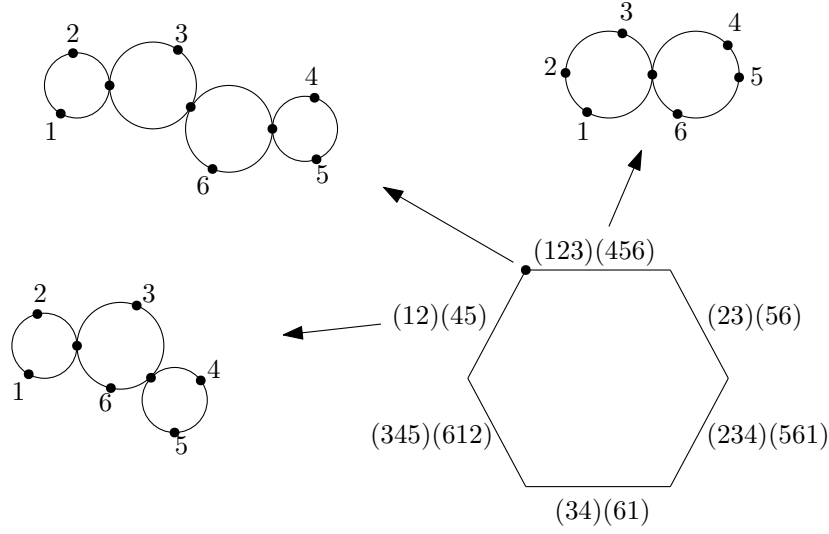}
        \caption{The compactified moduli space $\overline{\mathcal{M}}(D_3)$ is described by the hexagon. Index $i$ stands for $p_i\in\partial \dot F$. The indices inside brackets describe the bubbling behavior, e.g., $(12)(45)$ means $p_1$ and $p_2$ (resp. $p_4$ and $p_5$) are close to each other. The involution condition is preserved on boundary strata, e.g., the cross ratio of the two bubble disks on the stratum $(123)(456)$ are the same.}
        \label{fig-disk-moduli}
    \end{figure}
    
    We first consider $\partial\mathcal{M}(D_3)\cap\mathcal{M}(T^*I_1)$. 
    The map from a disk to the middle of Figure \ref{fig-curve-1} may have a double branch point inside the inner region with degree 2. 
    As the branch point touches the boundary of the inner region, it is replaced by a slit with 2 switch points along the Lagrangians. 
    Since we are interested in $\partial\mathcal{M}(D_3)$, the bubbling behavior in Figure \ref{fig-disk-moduli} requires the slit to be very long so that some switch point meets another Lagrangian. 
    The involution condition further requires that such switch points come in pairs. 
    We conclude that $\partial\mathcal{M}(D_3)\cap\mathcal{M}(T^*I_1)$ consists of two points : one passes $q'_5$ to its left extending the line segment $(q'_4q'_5)$ and  downwards extending $(q'_6q'_5)$; the other passes $p_4$ to its right extending $(q'_5q'_4)$ and downwards extending $(q'_3q'_4)$. The two points are decipted as the violet dots in Figure \ref{fig-curve-1-sol}.
    
    \begin{figure}
        \centering
        \includegraphics[height=7cm]{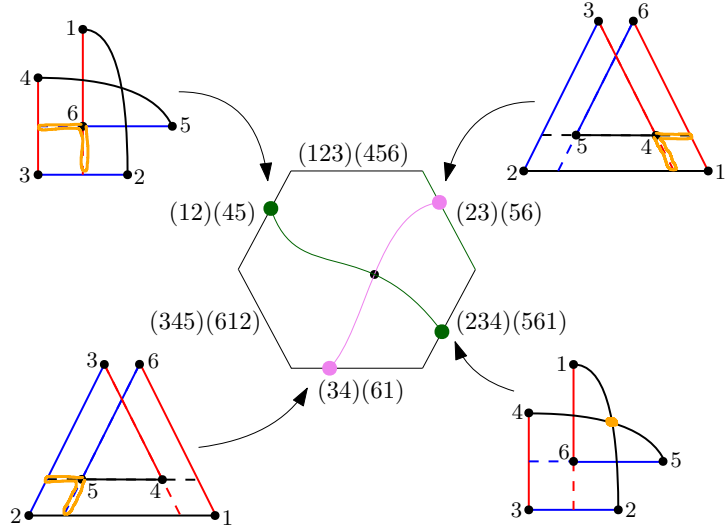}
        \caption{The orange curves in the pictures outside the hexagon represent slits. The black dot inside the hexagon is a curve in $\mathcal{M}_{J}(D_3)\cap\mathcal{M}_{J}(T^*I_1)\cap\mathcal{M}_{J}(T^*I_2)$.}
        \label{fig-curve-1-sol}
    \end{figure}
    
    For $\partial\mathcal{M}(D_3)\cap\mathcal{M}(T^*I_2)$, consider the right of Figure \ref{fig-curve-1}. 
    Similar to the previous paragraph, the degeneration of $D_6$ requires the existence of long slits. 
    There are two curves: one with a slit passing $q''_6$ to its left and downwards; the other lies on the Lagrangian $(q''_1q''_2)$ or $(q''_4q''_5)$ with one switch point meeting the intersection point $(q''_1q''_2)\cap(q''_4q''_5)$. The two curves are decipted as the dark-green dots in Figure \ref{fig-curve-1-sol}. 
    
    The violet and dark-green points in $\partial\mathcal{M}(D_3)$ indicate that $\mathcal{M}(D_3)\cap\mathcal{M}(T^*I_1)$ and $\mathcal{M}(D_3)\cap\mathcal{M}(T^*I_2)$ have intersection of algebraic count 1 inside $\mathcal{M}(D_3)$. Thus,
    \begin{equation} \label{eq TThT}
        \#\mathcal{M}^{\chi=1}_{J}(T_1,T_1,T_1)=\#\mathcal{M}(D_3)\cap\mathcal{M}(T^*I_1)\cap\mathcal{M}(T^*I_2)=1.
    \end{equation}
    
If $\chi\neq1$, we show that $\#\mathcal{M}^{\chi}_{J}(T_1,T_1,T_1)=0$.
    As $d\to0$, $\dot F'_{(12)}$ bubbles off as a triangle.
    Since the projection of $\dot{F}'\backslash\dot F'_{(12)}$ to $T^*I_2$ is of degree 1 to its image (the polygon composed of $\{q''_3,q''_4,q''_5,q''_6,q''_b\}$ in Figure \ref{fig-curve-1}), the domain before degeneration has to be a disk. This contradicts with the fact that $\chi\neq1$.
\end{proof}

The counting in (\ref{eq TThT}) is essentially the only case in our direct computation where a nontrivial curve exists. It corresponds to the deformation $\tilde{T}_{i}^2=1+\hbar \tilde{T}_i$ from the symmetric group to the Hecke algebra.

\section{The general case}
\label{section-3}

In this section, we compute $\operatorname{End}(L^{\otimes \kappa})$ by induction on $\kappa$. 
Recall that $\operatorname{End}(L^{\otimes \kappa})$ is freely generated by $T_w=\{y_1,\cdots,y_{\kappa}\}$, where $y_j \in L_{0j}\cap L_{1w(j)}$ and $w \in S_{\kappa}$ is viewed as a permutation.  
We compute $T_{w_1}\cdot T_{w_2}$ for $w_1, w_2 \in S_{\kappa}$ by a case-by-case discussion depending on how $w_1$ acts on the last one or two elements of $\{1,\dots,\kappa\}$.

The first case is when $w_1$ fixes the last element.     
The schematic picture is shown in Figure \ref{fig-induction-1}.
The following proposition is a generalization of Lemmas \ref{lemma 11=1}, \ref{lemma 1T=T}.

\begin{figure}
    \centering
    \includegraphics[width=10cm]{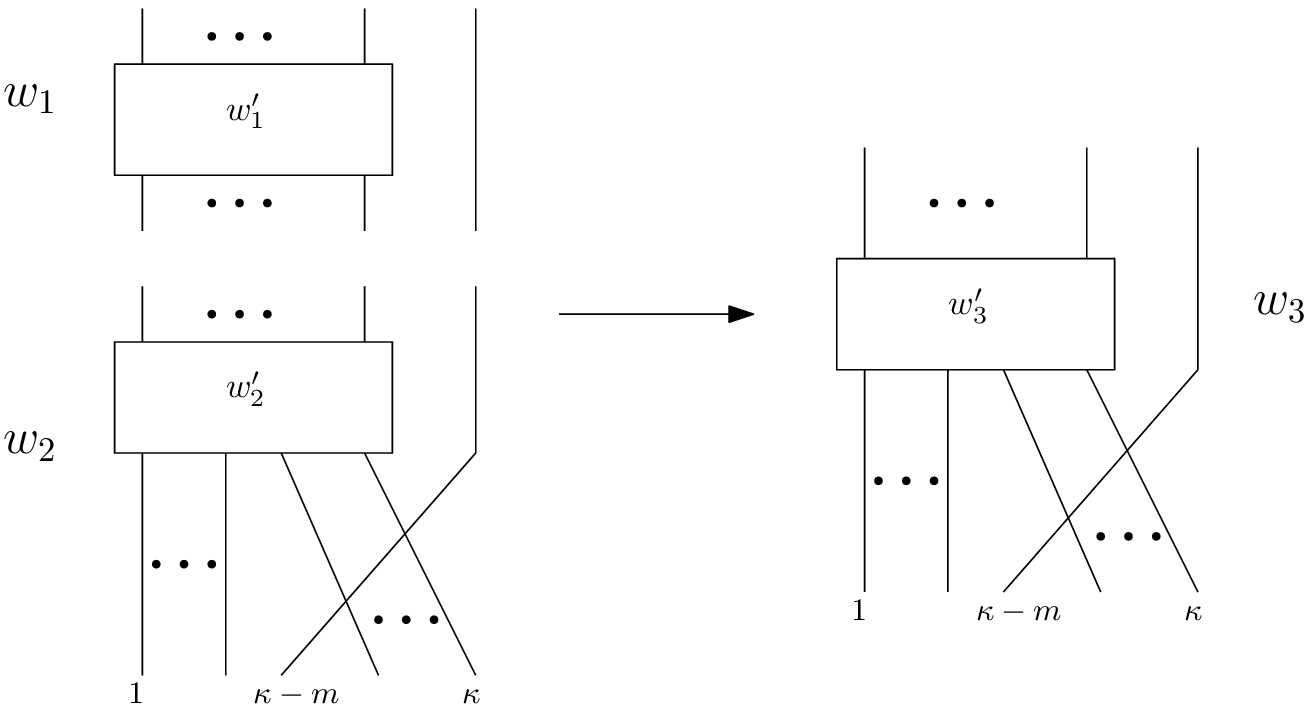}
    \caption{The case for Proposition \ref{prop-key1}.}
    \label{fig-induction-1}
\end{figure}

\begin{proposition} 
\label{prop-key1}
    For $w_1,w_2,w_3 \in S_{\kappa}$, suppose $w_1=w_1', w_2=w_2's_{\kappa-1}s_{\kappa-2}\cdots s_{\kappa-m}$, where $w_1', w_2' \in S_{\kappa-1}$ and $m \ge 0$.
    We have 
    \begin{align*} 
    \#\mathcal{M}^{\chi}(T_{w_1}, T_{w_2}, T_{w_3}) =\left\{
            \begin{array}{ll}
            \#\mathcal{M}^{\chi-1}(T_{w_1'}, T_{w_2'}, T_{w_3'}) & \mathrm{if}\,\,\, w_3=w_3's_{\kappa-1}s_{\kappa-2}\cdots s_{\kappa-m},\\
                   0 & \mathrm{otherwise},
                   \end{array}\right.
    \end{align*}
    where $w_3' \in S_{\kappa-1}$.
\end{proposition}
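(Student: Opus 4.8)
The plan is to peel off the strand indexed by $\kappa$ and reduce the count to the $(\kappa-1)$-strand problem. Geometrically, the hypothesis $w_1=w_1'\in S_{\kappa-1}$ says that the $\kappa$-th strand of $T_{w_1}$ sits on the bottom Lagrangian $L_{1\kappa}$, over the segment of length $d$ in the $T^*I_1$-direction (Figure \ref{fig-induction-1}), while $w_2=w_2's_{\kappa-1}\cdots s_{\kappa-m}$ prescribes that this strand threads monotonically past the strands in positions $\kappa-1,\dots,\kappa-m$ without self-crossing. On the top level $\chi=\kappa$ this is just the identity $w_1w_2=(w_1'w_2')\,s_{\kappa-1}\cdots s_{\kappa-m}$, so $w_3'=w_1'w_2'$; the content of the proposition is that the same factorization persists at every $\chi$, the shift $\chi\mapsto\chi-1$ recording that the peeled strand contributes a single trivial disk ($\chi=1$). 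Accordingly I would show that every curve contributing to $\mathcal{M}^{\chi}(T_{w_1},T_{w_2},T_{w_3})$ splits off the $\kappa$-strand as a disjoint trivial disk, leaving a genuine curve for the $(\kappa-1)$-problem.

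First I would run the degeneration $d\to 0$ used in Lemmas \ref{lemma 11=1} and \ref{lemma 1T=T} and in the general discussion following Lemma \ref{lemma 1T=T}: as the bottom segment of $L_{1\kappa}$ shrinks, the component $\dot{F}'_{(12)}$ carrying the two punctures on $L_{1\kappa}$ bubbles off as a triangle at a boundary node $p_a$. I would then reproduce the degree argument of Lemma \ref{lemma 11=1}: the image of this triangle under $\pi_{T^*I_2}\circ u$ is a homeomorphism onto a triangle whose third vertex is the image of $p_a$, and near that vertex $\pi_{T^*I_2}\circ u$ has degree $0$ or $1$; this forces the constant piece adjacent to $p_a$ to close the triangle off into a connected component of the limit, disjoint from the remaining strands. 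Consequently $\dot{F}$ was already disconnected before the degeneration, as a trivial disk carrying the $\kappa$-strand together with a curve on the remaining $\kappa-1$ strands. Since the trivial disk has $\chi=1$, the surviving curve has Euler characteristic $\chi-1$; note that $\kappa-\chi=(\kappa-1)-(\chi-1)$, so the power of $\hbar$ in (\ref{m_2}) is unchanged and the reduction is consistent with the dimension formula $\dim\mathcal{M}(\dot{F})=3(\kappa-\chi)$.

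Next I would read off the output constraint. Tracking the output puncture of the split-off disk over $z_0$ shows its endpoint is forced to follow the chain $s_{\kappa-1}\cdots s_{\kappa-m}$, so a nonempty moduli space requires $w_3=w_3'\,s_{\kappa-1}\cdots s_{\kappa-m}$ with $w_3'\in S_{\kappa-1}$, and the surviving curve is precisely an element of $\mathcal{M}^{\chi-1}(T_{w_1'},T_{w_2'},T_{w_3'})$. If $w_3$ is not of this form, the $\kappa$-strand disk cannot close up consistently, and the degeneration forces a nodal point either onto an empty intersection $L_{2i}\cap L_{2j}$ in the $T^*I_2$-direction or onto a point where $\pi_{T^*I_2}\circ u$ has degree $0$, exactly as in the emptiness arguments of Lemmas \ref{lemma 11=1} and \ref{lemma 1T=T}; hence the moduli space is empty, giving the ``otherwise'' alternative. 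To obtain the asserted equality of signed counts I would construct the inverse by adjoining to any curve for the $(\kappa-1)$-problem the standard disjoint trivial disk carrying the $\kappa$-strand; this disjoint union is mutually inverse to the peeling, and it preserves signs because the trivial disk, with its unique spin structure, carries orientation sign $+1$ (cf. \cite[Section 3]{colin2020applications}).

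The main obstacle I expect is establishing the decoupling uniformly: one must show that no branch point ever connects the $\kappa$-strand to the other $\kappa-1$ strands, for every $m\ge 0$ and every $\chi$, even as that strand sweeps past the other strands in positions $\kappa-1,\dots,\kappa-m$. Concretely, the degree and projection arguments above must be checked to still force disconnection in the presence of the chain $s_{\kappa-1}\cdots s_{\kappa-m}$, and the sign contributed by the split-off disk must be confirmed to be $+1$ so that the signed counts — not merely the unsigned ones — agree.
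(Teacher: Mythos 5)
Your treatment of the positive case ($w_3=w_3's_{\kappa-1}\cdots s_{\kappa-m}$) is exactly the paper's argument: degenerate $d\to 0$, bubble off $\dot F'_{(12)}$ as a triangle at a node $p_a$, use the degree-$0$-or-$1$ statement for $\pi_{T^*I_2}\circ u$ near $q''_3$ to force $\dot F'_{(12)}\cup\dot F'_{(3)}$ to be a connected component of the limit, conclude that $\dot F$ already had a triangle component carrying the $\kappa$-strand before the degeneration, and remove it to identify the count with $\#\mathcal{M}^{\chi-1}(T_{w_1'},T_{w_2'},T_{w_3'})$.

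The genuine gap is in the vanishing direction. The paper's proof is organized by a trichotomy on the position $t$ at which the strand of $w_3$ issuing from position $\kappa$ ends: $t=\kappa-m$ (the reduction above), $t>\kappa-m$, and $t<\kappa-m$; and the two non-matching cases require different arguments. Your two proposed mechanisms --- a node forced onto an empty intersection $L_{2i}\cap L_{2j}$, or onto a point where $\pi_{T^*I_2}\circ u$ has degree $0$ --- only dispose of $t>\kappa-m$, where the third vertex of the would-be triangle would have to map to the intersection of the extensions of $(q''_1q''_b)$ and $(q''_2q''_3)$, a degree-$0$ point. (The empty-intersection mechanism, borrowed from Lemma \ref{lemma 1T=T}, is not used in this proposition at all.) For $t<\kappa-m$ the bubbled triangle is perfectly consistent in the $T^*I_2$-projection: its third vertex lands on the honest intersection $L_{0\kappa}\cap L_{2(\kappa-m)}$, which simply fails to be a puncture of the output tuple, so no $T^*I_2$-argument can rule it out. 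The paper instead derives a contradiction in the $D_3$- and $T^*I_1$-directions, as in the proof that $\mathcal{M}_J(T_{\operatorname{id}},T_{\operatorname{id}},T_1)=\emptyset$ in Lemma \ref{lemma 11=1}: the component $\dot F'_{(b)}$ adjacent to the node must be mapped by $\pi_{D_3}\circ u$ to the constant $z_0$, while in the $T^*I_1$-direction the segment $(q'_1q'_b)$ lies on $L_{0\kappa}$, so its extension is disjoint from $L_{0i}$ for $i<\kappa$ and $q'_b$ cannot be separated from the bottom-left region, whose generators map to $z_2$; these two conclusions are incompatible. Relatedly, the uniform splitting asserted in your second paragraph --- that \emph{every} curve splits off the $\kappa$-strand triangle --- is false precisely when $t\neq\kappa-m$: there the limit is not a disconnected triangle plus a $(\kappa-1)$-strand curve, it does not exist, and proving that is where the case analysis you skipped does the real work.
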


\begin{proof}
    Suppose that the strand of $w_3$ starting from the position $\kappa$ ends on the position $t$. See the right of Figure \ref{fig-induction-1}. Here, we are reading the picture for $w_3$ from top to bottom.  
    We consider the following three cases depending on $t$.
\begin{enumerate}
\item $t=\kappa-m$.    
Let $w_3=w_3's_{\kappa-1}s_{\kappa-2}\cdots s_{\kappa-m}$ for $w_3' \in S_{\kappa-1}$.
This case is shown in Figure \ref{fig-induction-1-1}.
    The last vertical strand of $w_1$ in Figure \ref{fig-induction-1} corresponds to $p_1$ in Figure \ref{fig-induction-1-1}.
    As $d\to0$, $\dot{F}'_{(12)}$ bubbles off as a triangle with vertices $\{p_1,p_2,p_a\}$. 
    The image of $\dot{F}'_{(12)}$ in the $T^*I_2$-direction is the orange triangle and the image of $\dot{F}'_{(3)}$ is the constant point $q''_3$.
    Since $\pi_{T^*I_2}\circ u$ is of degree 0 or 1 near $q''_3$, the image $\pi_{T^*I_2}\circ u(\dot{F}\backslash(\dot{F}'_{(12)}\cup\dot{F}'_{(3)}))$ is disjoint from $q''_3$. It implies that $\dot{F}'_{(12)}\cup\dot{F}'_{(3)}$ is a connected component of $\dot{F}'$. 
    Therefore, $\dot{F}_{(123)}$ is a connected component of $\dot{F}$ before the degeneration, and it is mapped homeomorphically to the triangle $\{q''_1,q''_2,q''_3\}$ under $\pi_{T^*I_2}\circ u$.
    After removing the component $\dot{F}_{(123)}$, we see that $\#\mathcal{M}^{\chi}(T_{w_1}, T_{w_2}, T_{w_3})=\#\mathcal{M}^{\chi-1}(T_{w'_1}, T_{w'_2}, T_{w'_3})$.
        \begin{figure}
        \centering
        \includegraphics[width=12cm]{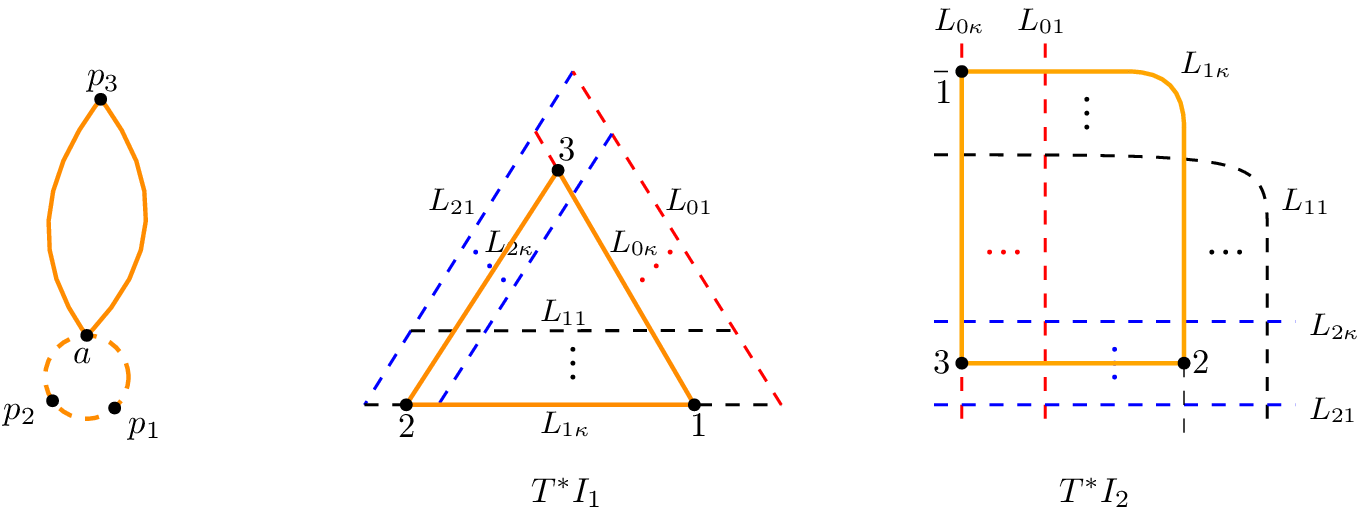}
        \caption{The case $t=\kappa-m$.}
        \label{fig-induction-1-1}
    \end{figure}

\item $t>\kappa-m$. 
    This case is shown in Figure \ref{fig-induction-1-2}.
    As $d\to0$, $\dot{F}'_{(12)}$ bubbles off as a triangle with vertices $\{p_1,p_2,p_a\}$. 
    There is a vertex $p_b$ in the component $\dot{F}'_{(3)}$ which is adjacent to $p_a$.
    Since $\pi_{T^*I_2}\circ u$ has degree 0 near the intersection between the extensions of $(q''_1q''_b)$ and $(q''_2q''_3)$, $\dot{F}'_{(12)}$ cannot be a triangle. This leads to a contradiction.
    Therefore, $\#\mathcal{M}^{\chi}(T_{w_1}, T_{w_2}, T_{w_3})=0$.   
    \begin{figure}
        \centering
        \includegraphics[width=12cm]{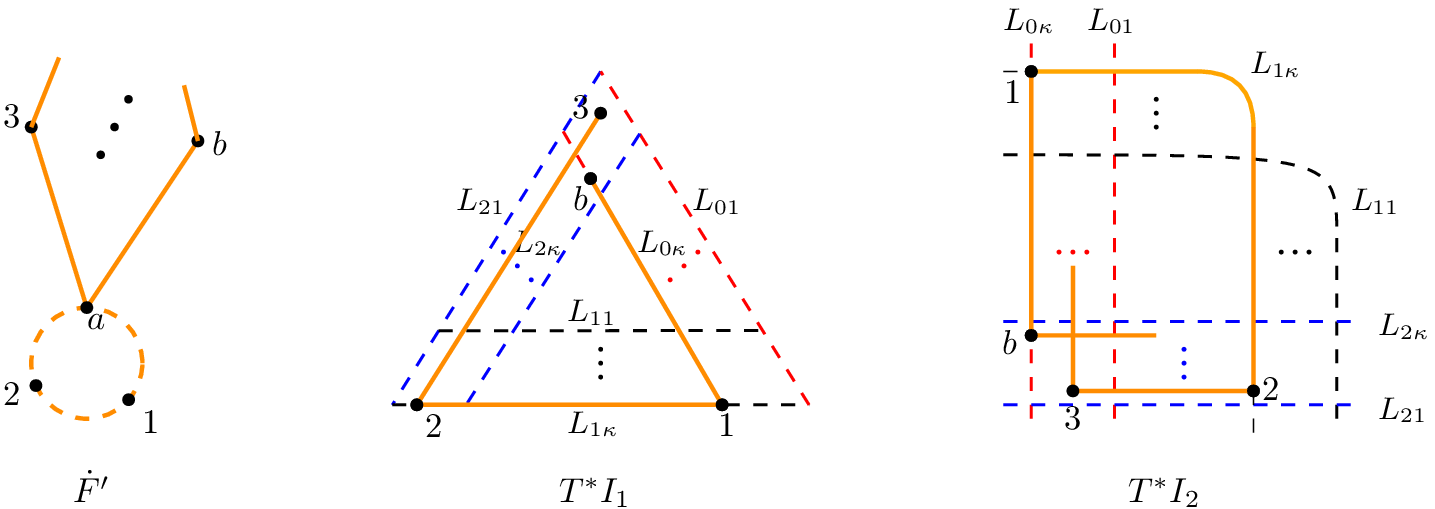}
        \caption{The case $t>\kappa-m$.}
        \label{fig-induction-1-2}
    \end{figure}  
\item $t<\kappa-m$. 
    This case is shown in Figure \ref{fig-induction-1-3}.
    As $d\to0$, $\dot{F}'_{(12)}$ bubbles off as a triangle with vertices $\{p_1,p_2,p_a\}$.
    On one hand, similar to the proof of $\#\mathcal{M}_{J}(T_{\operatorname{id}},T_{\operatorname{id}},T_1)=0$ of Lemma \ref{lemma 11=1}, the projection of $\dot{F}'_{(b)}$ under $\pi_{D_3}\circ u$ is the constant map to $z_0$.
    On the other hand, the line denoted by the orange arrow is disjoint from $L_{0i}$ for $i=1,\dots,\kappa-1$ since $(q'_1q'_b)$ lies on $L_{0\kappa}$. So $q'_b$ cannot be separated from the bottom-left region. But the generators in this region are mapped to $z_2$ in the $D_3$-direction. 
    We conclude that $\pi_{D_3}\circ u(\dot{F}'_{(b)})$ cannot be far from $z_2$. This is a contradiction. 
    Therefore, $\#\mathcal{M}^{\chi}(T_{w_1}, T_{w_2}, T_{w_3})=0$.
      \begin{figure}
        \centering
        \includegraphics[width=12cm]{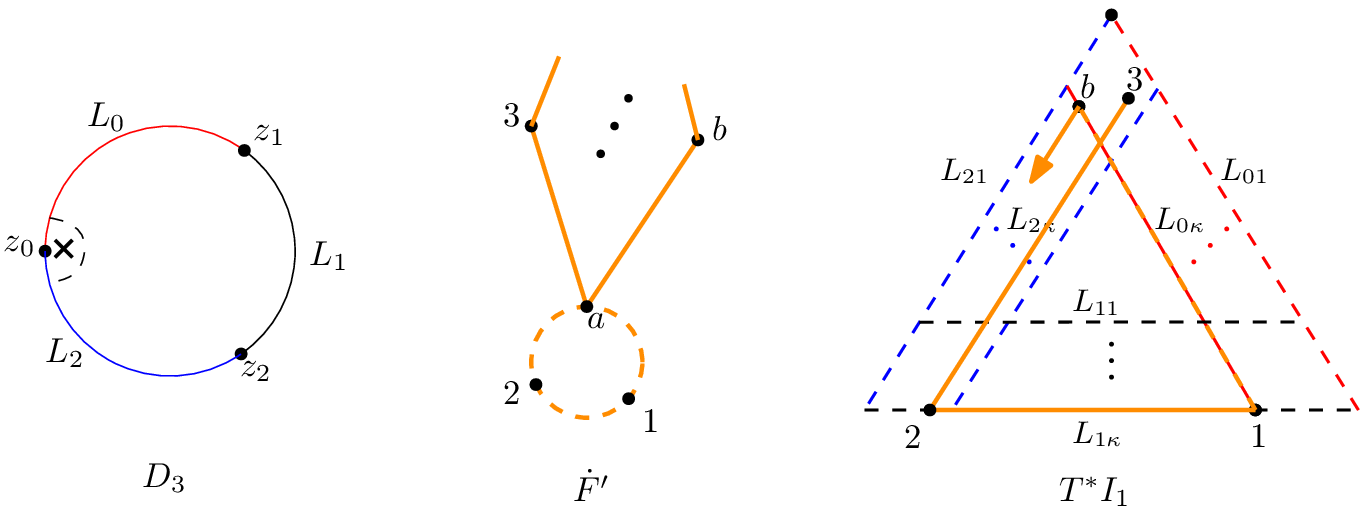}
        \caption{The case $t<\kappa-m$.}
        \label{fig-induction-1-3}
    \end{figure}
\end{enumerate}
This finishes the proof.
\end{proof}

The second case is when $w_1$ exchanges the last two elements.  
The schematic pictures are shown in Figures \ref{fig-induction-2} and \ref{fig-induction-22}, which correspond to two subcases depending on the action of $w_2$ on the last two elements.
The following proposition is a generalization of Lemmas \ref{lemma T1=T}, \ref{lemma TT}.

\begin{figure}
    \centering
    \includegraphics[width=10cm]{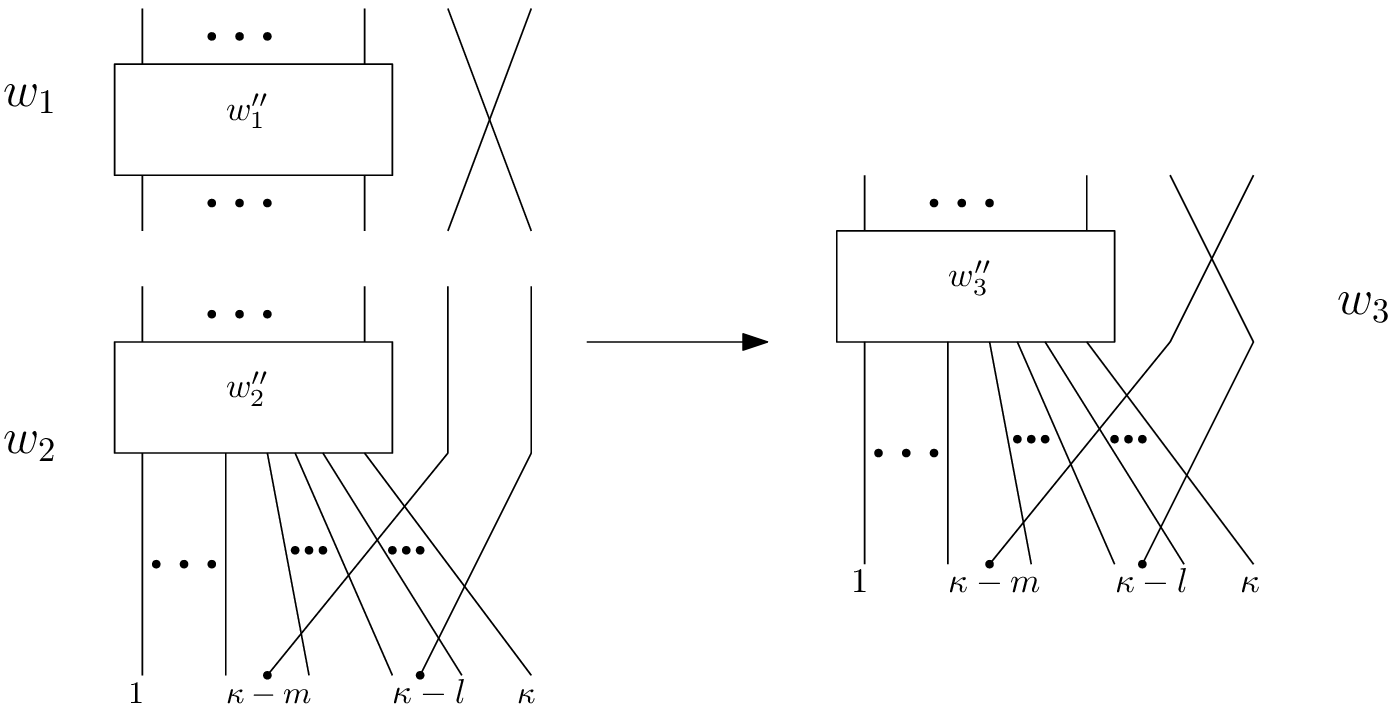}
    \caption{The case for Proposition \ref{prop-key2} (1).}
    \label{fig-induction-2}
\end{figure}    
\begin{figure}
    \centering
    \includegraphics[width=12cm]{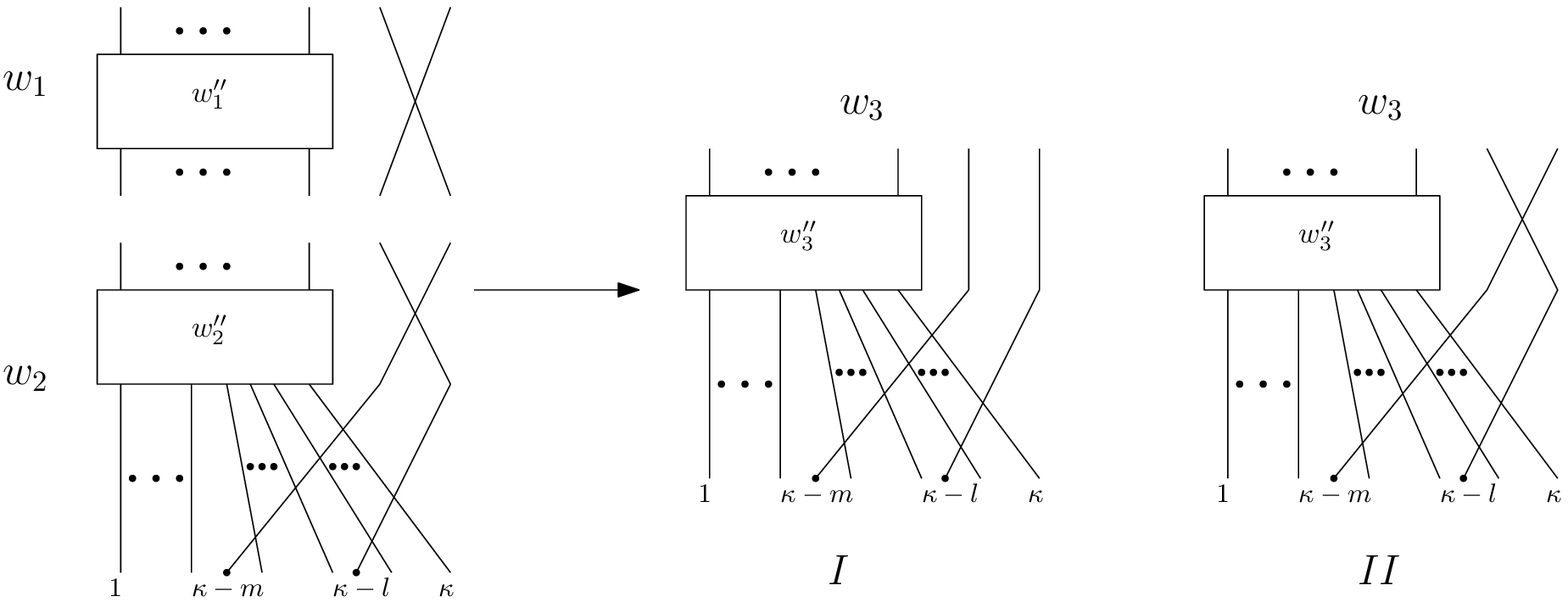}
    \caption{The case for Proposition \ref{prop-key2} (2).}
    \label{fig-induction-22}
\end{figure}

\begin{proposition} 
\label{prop-key2}
    For $w_1,w_2,w_3 \in S_{\kappa}$, suppose that $w_1=w_1''s_{\kappa-1}$, where $w_1'' \in S_{\kappa-2}$.
    \begin{enumerate}
        \item If $w_2=w_2''s_{\kappa-2}\cdots s_{\kappa-m}s_{\kappa-1}s_{\kappa-2}\cdots s_{\kappa-l}$, where $w_2'' \in S_{\kappa-2}$, $m > l \ge 0$,
        we have 
        \begin{align*} 
           & \#\mathcal{M}^{\chi}(T_{w_1}, T_{w_2}, T_{w_3}) \\
           =&\left\{
            \begin{array}{ll}
                \#\mathcal{M}^{\chi-2}(T_{w_1''}, T_{w_2''}, T_{w_3''}) & \mathrm{if}\,\,\, w_3=w_3''s_{\kappa-1}s_{\kappa-2}\cdots s_{\kappa-m}s_{\kappa-1}s_{\kappa-2}\cdots s_{\kappa-l},\\
                0 & \mathrm{otherwise},
           \end{array}\right.
        \end{align*}
        where $w_3'' \in S_{\kappa-2}$.
        \item If $w_2=w_2''s_{\kappa-1}s_{\kappa-2}\cdots s_{\kappa-m}s_{\kappa-1}s_{\kappa-2}\cdots s_{\kappa-l}$, where $w_2'' \in S_{\kappa-2}$, $m > l \ge 0$,
        we have 
        \begin{align*} 
            & \#\mathcal{M}^{\chi}(T_{w_1}, T_{w_2}, T_{w_3}) \\
            =&\left\{
            \begin{array}{ll}
                \#\mathcal{M}^{\chi-2}(T_{w_1''}, T_{w_2''}, T_{w_3''}) & \mathrm{if}\,\,\, w_3=w_3''s_{\kappa-2}\cdots s_{\kappa-m}s_{\kappa-1}s_{\kappa-2}\cdots s_{\kappa-l},\\
                \#\mathcal{M}^{\chi-1}(T_{w_1''}, T_{w_2''}, T_{w_3''}) & \mathrm{if}\,\,\, w_3=w_3''s_{\kappa-1}s_{\kappa-2}\cdots s_{\kappa-m}s_{\kappa-1}s_{\kappa-2}\cdots s_{\kappa-l},\\
               0 & \mathrm{otherwise},
           \end{array}\right.
       \end{align*}
        where $w_3'' \in S_{\kappa-2}$.
    \end{enumerate}               
\end{proposition}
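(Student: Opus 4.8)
The plan is to re-run the degeneration argument of Proposition~\ref{prop-key1}, now adapted to the two strands that $w_1=w_1''s_{\kappa-1}$ interchanges. I would arrange the configuration (Figures~\ref{fig-induction-2} and~\ref{fig-induction-22}) so that the two strands swapped by $s_{\kappa-1}$ correspond to the punctures $p_1,p_2$ sitting on the bottom Lagrangian $L_{1\kappa}$ in the $T^*I_1$-direction. Letting $d\to 0$, the component $\dot{F}'_{(12)}$ bubbles off as a triangle with vertices $\{p_1,p_2,p_a\}$, exactly as in Lemmas~\ref{lemma T1=T} and~\ref{lemma TT}. As in Proposition~\ref{prop-key1}, I would then track where the strands of $w_3$ leaving the last two positions terminate, and organize the proof as a case analysis on these endpoints, invoking the degree-$0$-or-$1$ constraint on $\pi_{T^*I_2}\circ u$ and the ``$z_0$ versus $z_2$'' constraint on $\pi_{D_3}\circ u$ to exclude every configuration that does not match the asserted form of $w_3$.

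For part~(1) the relevant local model on the last two strands is that of Lemma~\ref{lemma T1=T}, namely $T_1\cdot T_{\operatorname{id}}=T_1$. Since $w_2=w_2''s_{\kappa-2}\cdots s_{\kappa-m}s_{\kappa-1}s_{\kappa-2}\cdots s_{\kappa-l}$ does not bring the two strands back across each other, the only surviving limit splits the two bottom strands off as two disks whose $\pi_{T^*I_2}\circ u$-images are triangles; this forces $w_3=w_3''s_{\kappa-1}s_{\kappa-2}\cdots s_{\kappa-m}s_{\kappa-1}s_{\kappa-2}\cdots s_{\kappa-l}$ and yields the clean reduction $\#\mathcal{M}^{\chi}=\#\mathcal{M}^{\chi-2}(T_{w_1''},T_{w_2''},T_{w_3''})$, while every other endpoint configuration is ruled out by the same three-case degree and $D_3$ arguments used in Proposition~\ref{prop-key1}. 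Once the local model is identified, this part is essentially bookkeeping.

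For part~(2), where $w_2=w_2''s_{\kappa-1}s_{\kappa-2}\cdots s_{\kappa-m}s_{\kappa-1}s_{\kappa-2}\cdots s_{\kappa-l}$ makes the two strands recross, the local model is that of Lemma~\ref{lemma TT}, namely $T_1\cdot T_1=T_{\operatorname{id}}+\hbar T_1$. Here two limiting behaviors survive. Either the two strands split off as two trivial triangles of local Euler characteristic $2$, which removes two components and produces the term $\#\mathcal{M}^{\chi-2}(T_{w_1''},T_{w_2''},T_{w_3''})$ attached to $w_3=w_3''s_{\kappa-2}\cdots s_{\kappa-m}s_{\kappa-1}s_{\kappa-2}\cdots s_{\kappa-l}$; or they form the single nontrivial curve of local Euler characteristic $1$, which removes only one unit of Euler characteristic and produces $\#\mathcal{M}^{\chi-1}(T_{w_1''},T_{w_2''},T_{w_3''})$ attached to $w_3=w_3''s_{\kappa-1}s_{\kappa-2}\cdots s_{\kappa-m}s_{\kappa-1}s_{\kappa-2}\cdots s_{\kappa-l}$. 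The signed count $1$ of the nontrivial local curve is exactly the transversal triple intersection $\mathcal{M}(D_3)\cap\mathcal{M}(T^*I_1)\cap\mathcal{M}(T^*I_2)$ computed in~\eqref{eq TThT}.

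The step I expect to be the main obstacle is making the localization rigorous: showing that the global count factors as the $\kappa=2$ local count on the last two strands times the count $\#\mathcal{M}(T_{w_1''},T_{w_2''},T_{w_3''})$ on the remaining $\kappa-2$ strands, with no cross-interaction. Concretely, in the $d\to 0$ limit I must prove that the component carrying $p_1,p_2$ (together with its recrossing partner in part~(2)) detaches as a genuine connected component of $\dot{F}'$, using that its $\pi_{T^*I_2}$-image is disjoint from the image of the remaining strands and that its $\pi_{D_3}$-image is consistent, so that Gromov compactness and the gluing reversing the degeneration identify the count with the asserted product. The recrossing case of part~(2) is the most delicate point, since there I must ensure that the nontrivial curve of Lemma~\ref{lemma TT} persists in the presence of the extra $\kappa-2$ strands and does not develop additional branch points linking it to the other components.
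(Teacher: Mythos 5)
Your proposal is correct and follows essentially the same route as the paper's proof: degenerate $d\to 0$ so that $\dot{F}'_{(12)}$ bubbles off as a triangle, run a case analysis on where the $w_3$-strands from the last two positions terminate (using the degree constraint on $\pi_{T^*I_2}\circ u$ and the $D_3$-direction constraints to kill all but the stated configurations), and reduce the surviving configurations to the local $\kappa=2$ models of Lemmas~\ref{lemma T1=T} and~\ref{lemma TT}, with the connected-component splitting argument justifying the factorization of the count. The paper carries this out via an explicit seven-subcase enumeration (including a boundary-orientation argument and a triangle-plus-quadrilateral removal in the recrossing subcase), but the strategy, the local models, and the identification of the delicate localization step all match.
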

\begin{proof}
    The proof is similar to but slightly longer than that of Proposition \ref{prop-key1} since we need to discuss the last two strands of $w_1$ instead of one. 
    We keep track of the following labels.
    \begin{itemize}
        \item The strands of $w_3$ which start from the positions $\kappa$ and $\kappa-1$ end on positions $t_1$ and $t_2$, respectively.
        \item The strands of $w_3$ which end on the position $\kappa-m$ and $\kappa-l$ start from positions $r_1$ and $r_2$, respectively.
    \end{itemize}
    
    Figure \ref{fig-induction-2-general} describes the part of generators $T_{w_1},T_{w_2}$ corresponding to the last two strands of $w_1$, where the dashed circles describe the undetermined $T_{w_3}$.
We discuss Cases (1) and (2) separately in the following. 

    \begin{figure}
        \centering
        \includegraphics[width=14cm]{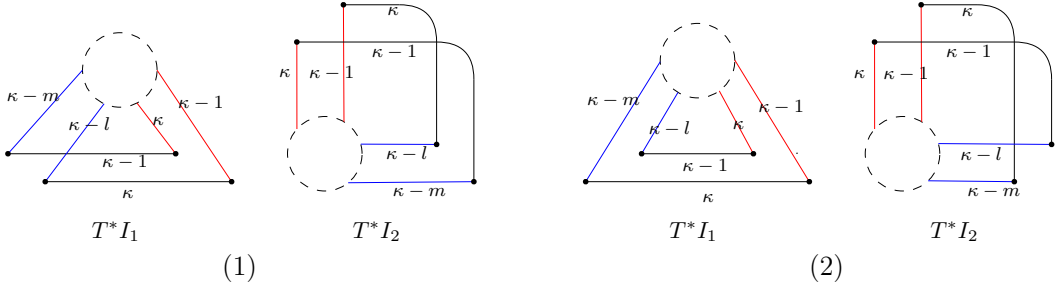}
        \caption{The part of generators $T_{w_1},T_{w_2}$ for Cases (1) and (2).}
        \label{fig-induction-2-general}
    \end{figure}
    
%

    \begin{figure}
        \centering
        \includegraphics[width=15cm]{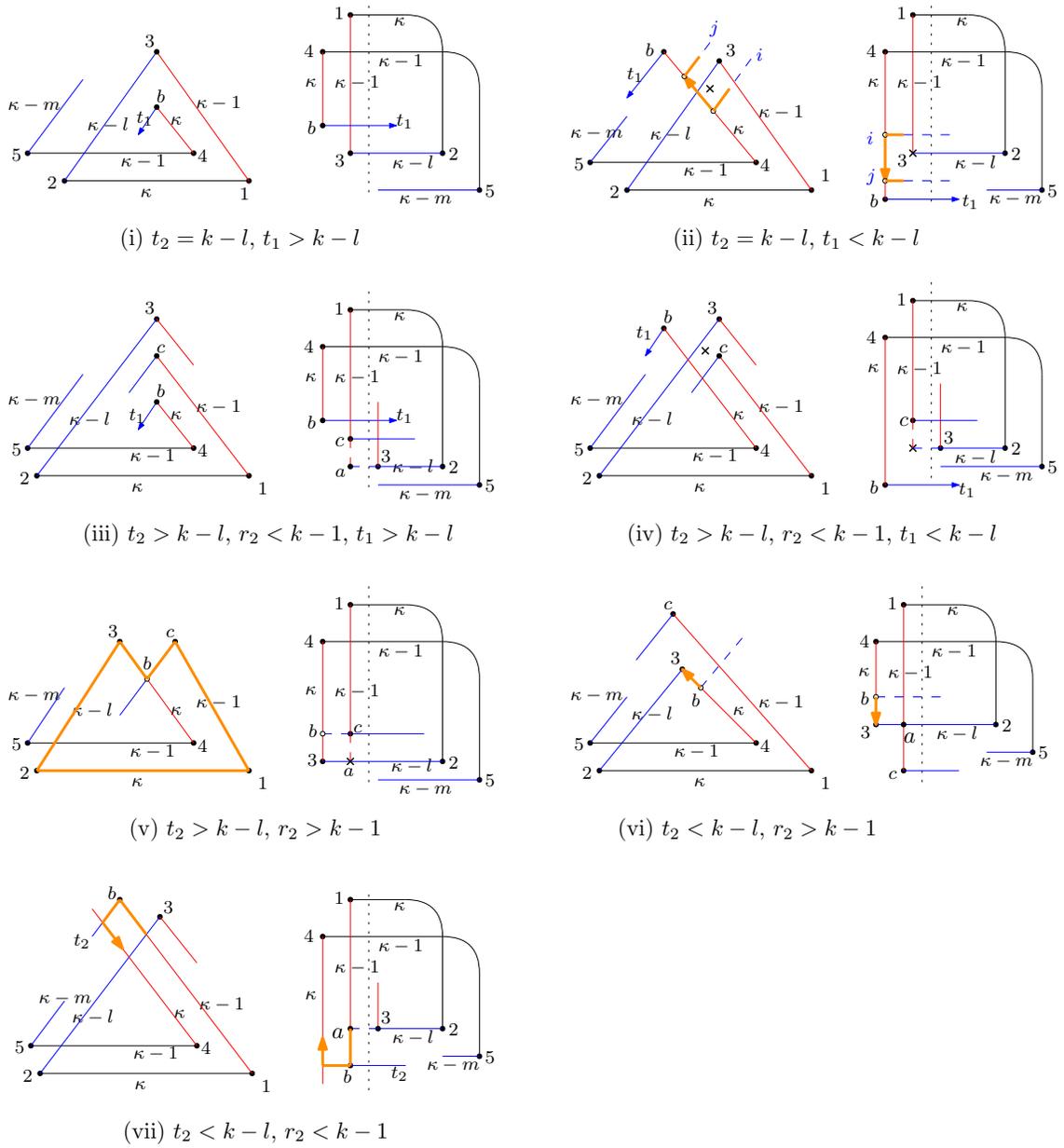}
        \caption{The subcases of Case (1).}
        \label{fig-induction-2-1}
    \end{figure}
    
    ~\\
    \noindent
    (1)  We first consider the case when $t_1=\kappa-m$, $t_2=\kappa-l$. 
This is equivalent to 
$ w_3=w_3''s_{\kappa-1}s_{\kappa-2}\cdots s_{\kappa-m}s_{\kappa-1}s_{\kappa-2}\cdots s_{\kappa-l},$
    for some $w_3''\in S_{\kappa-2}$.
    Consider a holomorphic curve in $\mathcal{M}^{\chi}(T_{w_1}, T_{w_2}, T_{w_3})$ which contains two trivial disks corresponding to the last two strands of $w_1$. 
    The remaining components represent a curve in $\mathcal{M}^{\chi-2}(T_{w_1''}, T_{w_2''}, T_{w_3''})$.
Thus, $\mathcal{M}^{\chi-2}(T_{w_1''}, T_{w_2''}, T_{w_3''})$ can be viewed as a subset of $\mathcal{M}^{\chi}(T_{w_1}, T_{w_2}, T_{w_3})$. 
We show that no other curve exists in the rest of the proof.
    The subcases are shown in Figure \ref{fig-induction-2-1}.

    \begin{enumerate}[label=(\roman*)]
        \item $t_2=k-l$, $t_1>k-l$.
        As $d\to0$, $\dot{F}_{(12)}$ bubbles off as a triangle with vertices $\{p_1,p_2,p_a\}$, where $p_a\in\dot{F}'$ is the nodal point mapped to the limit of $q_1'$ and $q_2'$ in the $T^*I_1$-direction.
        Then the projection of $\dot{F}'_{(3)}$ to the $T^*I_2$-direction must be the constant map to $q_3''$. 
        Since $\pi_{T^*I_2}\circ u$ is of degree 0 or 1 near $q_3$, the image $\pi_{T^*I_2}\circ u(\dot{F}\backslash(\dot{F}'_{(12)}\cup \dot{F}'_{(3)}))$ is disjoint from $q''_3$. 
        It implies that $\dot{F}'_{(12)}\cup \dot{F}'_{(3)}$ is a connected component of $\dot{F}'$. 
        Therefore, the triangle $\{p_1,p_2,p_3\}$ forms a connected component of $\dot{F}$ before the degeneration.
        By removing the triangle $\{p_1,p_2,p_3\}$, the problem reduces to the case (2) of Proposition \ref{prop-key1} with $\kappa-1$-strands. Hence, $\#\mathcal{M}^{\chi}(T_{w_1}, T_{w_2}, T_{w_3})=0$.
        \item $t_2=k-l$, $t_1<k-l$.
        As $d\to0$, $\dot{F}_{(12)}$ bubbles off as a triangle with vertices $\{p_1,p_2,p_a\}$ and the projection of $\dot{F}'_{(3)}$ to the $T^*I_2$-direction must be the constant map to $q''_3$.
        Moreover $\dot{F}'_{(3)}$ is a bigon with possible nodal degeneration points which are connected to other irreducible components of $\dot{F}'$.
        Denote one of such nodal points on $\dot{F}'$ by $p_n$, whose images in $T^*I_1$ and $T^*I_2$ are drawn as the crossings in Figure \ref{fig-induction-2-1} (ii).
        We now remove the bigon $\dot{F}'_{(3)}$ from $\dot{F}'$ but keep $p_n$ remained.
        We denote the irreducible component containing $p_n$ in the remaining part of $\dot{F}'$ by $\dot{F}'_{p_n}$. 
        
        In the $T^*I_2$-direction, the projection of $u(\dot{F}'\backslash\dot{F}'_{(3)})$ to the left side of the vertical dotted line is of degree 1.
        Let $C$ be the boundary of the image $\pi_{T^*I_2}\circ(\dot{F}'_{p_n})$. 
        Then the part of $C$ near $L_{0{\kappa}}\cap L_{2(\kappa-l)}$ is locally drawn as the orange lines, which goes from $L_{2i}$ to $L_{2j}$ on $L_{0\kappa}$ for $i>\kappa-l$, $j<\kappa-m$.
        We denote the preimage of the orange arrow from $L_{2i}$ to $L_{2j}$ by $C_{arrow}$. It has the positive boundary orientation.

 In the $T^*I_1$-direction, the position of the crossing must be above $L_{0\kappa}$ since $\pi_{D_3}\circ u(p_n)=z_0$.
        However, the image of $C_{arrow}$, denoted by the orange arrow, has the negative boundary orientation. This leads to a contradiction.
        Therefore, $\#\mathcal{M}^{\chi}(T_{w_1}, T_{w_2}, T_{w_3})=0$.      
        \item $t_2>k-l$, $r_2<k-1$, $t_1>k-l$.
        As $d\to0$, $\dot{F}_{(12)}$ bubbles off as a triangle with vertices $\{p_1,p_2,p_a\}$.
        Since on $T^*I_2$, $\pi_{T^*I_2}\circ u$ is of degree 0 near the intersection of the extension of $(q''_1q''_c)$ and $(q''_2q''_3)$, $\{p_1,p_2,p_a\}$ cannot form a triangle. This leads to a contradiction.
        Therefore, $\#\mathcal{M}^{\chi}(T_{w_1}, T_{w_2}, T_{w_3})=0$.
        \item $t_2>k-l$, $r_2<k-1$, $t_1<k-l$.
        As $d\to0$, $\dot{F}_{(12)}$ bubbles off as a triangle with vertices $\{p_1,p_2,p_a\}$, where $p_a$ is mapped to a point in $T^*I_2$, denoted by a crossing.
        We denote the preimage of this crossing in the irreducible component other than $\dot{F}_{(12)}$ and $\dot{F}_{(3)}$ by $p_n$.
        The image of $p_n$ in $T^*I_1$ is also denoted by a crossing. It sits above $L_{0\kappa}$ for the same reason as in (ii).
        The remaining argument is the same as in (ii). We conclude that $\#\mathcal{M}^{\chi}(T_{w_1}, T_{w_2}, T_{w_3})=0$.
        \item $t_2>k-l$, $r_2>k-1$.
        As $d\to0$, $\dot{F}_{(12)}$ bubbles off as a triangle $T$ with vertices $\{p_1,p_2,p_a\}$, where $p_a$ is mapped to the crossing in $T^*I_2$.
        The other irreducible component of $\dot{F}'$ containing $p_a$ is the quadrilateral $Q$ with vertices $\{p_3,p_c,p_a,p_b\}$ which is the bottom-left part in the $T^*I_2$-direction. 
        Figure \ref{fig-induction-2-1-v} describes the degenerated domain $\dot{F}'$.
        
        Removing $T$ and $Q$ from $\dot{F}'$ corresponds to removing the orange polygon in the $T^*I_1$-direction. 
        As a result, the vertices $\{p_1,p_2,p_3,p_c\}$ are replaced by $p_b$. Then the problem is reduced to the case (2) of Proposition \ref{prop-key1} with $\kappa-1$ strands.
        Hence, $\#\mathcal{M}^{\chi}(T_{w_1}, T_{w_2}, T_{w_3})=0$.   
        \begin{figure}
            \centering
            \includegraphics[width=6cm]{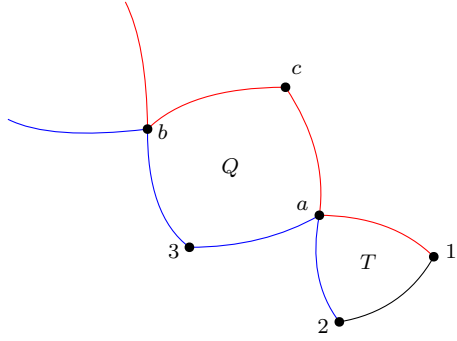}
        \caption{The subcase (v).}
            \label{fig-induction-2-1-v}
        \end{figure}
             \item $t_2<k-l$, $r_2>k-1$.
        This is similar to (ii). The orientation of the orange arrows leads to a contradiction. Hence $\#\mathcal{M}^{\chi}(T_{w_1}, T_{w_2}, T_{w_3})=0$.
        \item $t_2<k-l$, $r_2<k-1$.
        This is similar to (ii). So $\#\mathcal{M}^{\chi}(T_{w_1}, T_{w_2}, T_{w_3})=0$.
        \end{enumerate}
    
    \begin{figure}
        \centering
        \includegraphics[width=15cm]{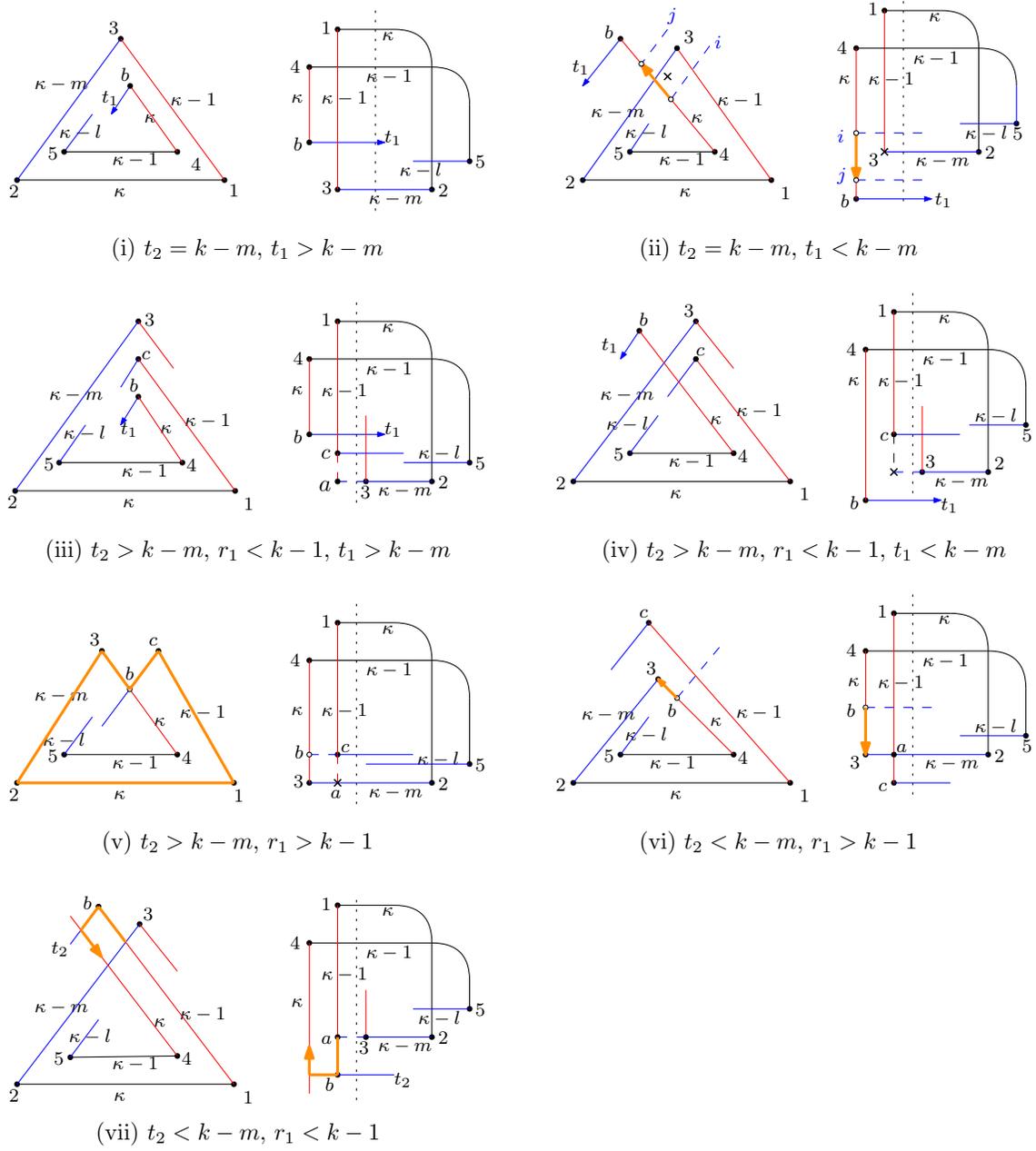}
        \caption{The subcases of Case (2).}
        \label{fig-induction-2-2}
    \end{figure}
    
    ~\\
    \noindent
    (2) The subcases are shown in Figure \ref{fig-induction-2-2}.
    The proofs of all subcases are similar to those in (1) except for the subcase (v). We discuss the subcase (v) only and omit the others.
    
    \begin{enumerate}[label=(\roman*)]
        \item[(v)] $t_2>k-m$, $r_1>k-1$.
        The proof is similar to that of the subcase (v) of (1).
         As $d\to0$, $\dot{F}_{(12)}$ bubbles off as a triangle $T$ with vertices $\{p_1,p_2,p_a\}$, where $p_a$ is mapped to the crossing in $T^*I_2$, and $\{p_3,p_c,p_a,p_b\}$ forms a quadrilateral $Q$, as the bottom-left part in the $T^*I_2$-direction. 
        Figure \ref{fig-induction-2-1-v} describes the degenerated domain $\dot{F}'$.
        Removing $T$ and $Q$ from $\dot{F}'$ corresponds to removing the orange polygon in the $T^*I_1$-direction. 
        As a result, the vertices $\{p_1,p_2,p_3,p_c\}$ are replaced by $p_b$. Then the problem is reduced to the case with $\kappa-1$ strands.
        There are three possibilities.
        \begin{enumerate}
            \item $t_2=\kappa-l$. This is similar to the case (1) of Proposition \ref{prop-key1}.
            If the limiting curve exists, then $\{p_1,p_2,p_3,p_4,p_5,p_c\}$ must forms a (hexagon) disk component $H$ of $\dot{F}$.
            The count of $u\in\mathcal{M}^{\chi}(T_{w_1}, T_{w_2}, T_{w_3})$ restricted to $H$ is exactly the count of $\mathcal{M}^{\chi=1}_{J}(T_1,T_1,T_1)$ in Lemma \ref{lemma TT}, which is equals 1.
            The count of $u$ restricted to $\dot{F}\backslash H$ is the count of $\mathcal{M}^{\chi-1}(T_{w_1''}, T_{w_2''}, T_{w_3''})$.
            Therefore, $\#\mathcal{M}^{\chi}(T_{w_1}, T_{w_2}, T_{w_3})=\#\mathcal{M}^{\chi-1}(T_{w_1''}, T_{w_2''}, T_{w_3''})$.
            \item $t_2>\kappa-l$. This is similar to the case (2) of Proposition \ref{prop-key1}. So $\#\mathcal{M}^{\chi}(T_{w_1}, T_{w_2}, T_{w_3})=0$.
            \item $t_2<\kappa-l$. This is similar to the case (3) of Proposition \ref{prop-key1}. So $\#\mathcal{M}^{\chi}(T_{w_1}, T_{w_2}, T_{w_3})=0$.
      \end{enumerate}
  \end{enumerate}
  This finishes the proof.
\end{proof}

The following corollaries are direct consequences by inductively using the two propositions above.

\begin{corollary} \label{cor1}
The generator $T_{{\operatorname{id}}}$ is the identity in $\operatorname{End}(L^{\otimes \kappa})$.
\end{corollary}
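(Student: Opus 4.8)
The plan is to show that $T_{\operatorname{id}} \cdot T_w = T_w$ and $T_w \cdot T_{\operatorname{id}} = T_w$ for every $w \in S_\kappa$, by peeling off the trailing strands of $w$ and invoking Proposition \ref{prop-key1} repeatedly. The key observation is that the permutation $w_1 = \operatorname{id}$ always \emph{fixes} the last element of $\{1,\dots,\kappa\}$, so Proposition \ref{prop-key1} applies directly with $w_1 = w_1' = \operatorname{id} \in S_{\kappa-1}$.

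First I would treat the left multiplication $T_{\operatorname{id}} \cdot T_w$. Write $w = w' s_{\kappa-1}s_{\kappa-2}\cdots s_{\kappa-m}$ with $w' \in S_{\kappa-1}$ and $m \ge 0$, which is exactly the normal form required for the factor $w_2$ in Proposition \ref{prop-key1} (here $m$ is determined by how $w$ moves the element in position $\kappa$). Taking $w_1 = \operatorname{id}$ (so $w_1' = \operatorname{id}_{\kappa-1}$) and $w_2 = w$, the proposition gives
\begin{equation*}
    \#\mathcal{M}^{\chi}(T_{\operatorname{id}}, T_w, T_{w_3}) = \#\mathcal{M}^{\chi-1}(T_{\operatorname{id}}, T_{w'}, T_{w_3'})
\end{equation*}
precisely when $w_3 = w_3' s_{\kappa-1}\cdots s_{\kappa-m}$ for some $w_3' \in S_{\kappa-1}$, and the count vanishes otherwise. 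This identity already shows that the only nonzero output generator $T_{w_3}$ has the same trailing-strand pattern as $w$, and it reduces the computation of $T_{\operatorname{id}}\cdot T_w$ in $S_\kappa$ to the computation of $T_{\operatorname{id}_{\kappa-1}} \cdot T_{w'}$ in $S_{\kappa-1}$, with the Euler characteristic shifted by one (so no extra power of $\hbar$ is introduced — the trailing strand contributes a single trivial disk with $\chi$ dropping by $1$, matching the $\hbar^{\kappa-\chi}$ bookkeeping in \eqref{m_2}). Iterating this reduction $\kappa-1$ times strips off all strands and lands in $S_1$, where $T_{\operatorname{id}}\cdot T_{\operatorname{id}} = T_{\operatorname{id}}$ is the trivial base case (a single trivial disk). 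Unwinding, we obtain $T_{\operatorname{id}}\cdot T_w = T_w$ with coefficient $1$ and no $\hbar$ corrections.

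The right multiplication $T_w \cdot T_{\operatorname{id}}$ is handled the same way: now $w_1 = w$ may be nontrivial, but I would induct on $\kappa$ using whichever of Proposition \ref{prop-key1} or \ref{prop-key2} matches the action of $w$ on the last one or two elements, with $w_2 = \operatorname{id}$ (equivalently $m=l=0$ in the normal forms). In each case the second factor being the identity forces $T_w$ to reproduce itself: the trailing strands of $w_2 = \operatorname{id}$ are vertical, so the matching condition on $w_3$ selects exactly $w_3 = w$, and the recursion again reduces to the $S_{\kappa-1}$ or $S_{\kappa-2}$ case. The main point to verify carefully is that in Proposition \ref{prop-key2}(2) the ``extra'' branch producing a factor of $\hbar$ (the row $\#\mathcal{M}^{\chi-1}$) does not fire when $w_2 = \operatorname{id}$: since $w_2 = \operatorname{id}$ means $m = l = 0$, the two normal forms in case (2) degenerate to the same permutation and only the $\chi-2$ contribution survives, so no spurious $\hbar T_w$ term appears. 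This bookkeeping step — confirming that the identity second factor never triggers the nontrivial-curve count of Lemma \ref{lemma TT} — is the only genuine subtlety; everything else is a routine unwinding of the two propositions down to the base case $\kappa = 1$.
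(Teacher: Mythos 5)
Your argument for the left identity $T_{\operatorname{id}}\cdot T_w=T_w$ is correct and is the intended use of Proposition \ref{prop-key1}: $w_1=\operatorname{id}$ fixes the last strand at every stage of the recursion, the coset decomposition $w=w's_{\kappa-1}\cdots s_{\kappa-m}$ peels off one strand at a time, and the shift $\chi\mapsto\chi-1$ exactly compensates $\kappa\mapsto\kappa-1$ in the weight $\hbar^{\kappa-\chi}$, so no $\hbar$-corrections arise.

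The right identity is where there is a genuine gap. Propositions \ref{prop-key1} and \ref{prop-key2} place their structural hypothesis on the \emph{first} factor $w_1$, and together they only cover $w_1\in S_{\kappa-1}$ (last strand fixed) and $w_1\in S_{\kappa-2}\,s_{\kappa-1}$ (last two strands exchanged). These two families contain $(\kappa-1)!+(\kappa-2)!$ elements, a proper subset of $S_\kappa$ once $\kappa\ge 3$: for instance $w=s_1s_2\in S_3$ sends $3\mapsto 1$, so it neither fixes the last element nor exchanges the last two, and neither proposition says anything about $m_2(T_w,T_{\operatorname{id}})$. Hence the step ``use whichever of Proposition \ref{prop-key1} or \ref{prop-key2} matches the action of $w$ on the last one or two elements'' does not get off the ground for general $w$. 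The repair is to establish Corollary \ref{cor2} first (there the left factor is always some $T_i$, which \emph{is} covered: Proposition \ref{prop-key1} when $i\le\kappa-2$, Proposition \ref{prop-key2} when $i=\kappa-1$), deduce $T_w=T_{i_1}\cdots T_{i_l}$ for a reduced word, and then compute $T_wT_{\operatorname{id}}=T_{i_1}\cdots T_{i_{l-1}}\bigl(T_{i_l}T_{\operatorname{id}}\bigr)=T_w$ using associativity of $m_2$, which holds since the complex is concentrated in degree $0$. Incidentally, your worry about the $\hbar$-branch of Proposition \ref{prop-key2}(2) firing when $w_2=\operatorname{id}$ is moot for a cleaner reason than the one you give: the trailing coset word of $\operatorname{id}$ is empty, so $\operatorname{id}$ is parsed by case (1) with degenerate indices, never by case (2), whose normal form always contains the letter $s_{\kappa-1}$.
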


\begin{corollary} \label{cor2}
We have
       $ T_iT_w=\left\{
        \begin{array}{ll}
        T_{s_iw} & \mathrm{if}\,\,\,l(s_iw)>l(w)+1,\\
               T_{s_iw}+\hbar T_w & \mathrm{if}\,\,\,l(s_iw)<l(w)-1.
               \end{array}\right.$
\end{corollary}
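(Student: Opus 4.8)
The plan is to unwind the product through the definition \eqref{m_2} of $m_2$ and then feed the resulting moduli counts into Propositions \ref{prop-key1} and \ref{prop-key2}, proceeding by induction on the number of strands $\kappa$. Writing
\[
T_iT_w=m_2(T_{s_i},T_w)=\sum_{w_3,\chi}\#\mathcal{M}^{\chi}(T_{s_i},T_w,T_{w_3})\,\hbar^{\kappa-\chi}\,T_{w_3},
\]
the whole problem becomes bookkeeping: deciding which $T_{w_3}$ acquire a nonzero coefficient and with which power of $\hbar$. The base case $\kappa=2$ is exactly Lemmas \ref{lemma T1=T} and \ref{lemma TT}, i.e.\ Proposition \ref{prop-2}. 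For the inductive step I would split according to whether the simple reflection $s_i=(i,i+1)$ touches the last strand, that is, whether $i<\kappa-1$ or $i=\kappa-1$.

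First I would treat $i<\kappa-1$, where $s_i$ fixes the position $\kappa$. Writing $w=w_2'\,s_{\kappa-1}s_{\kappa-2}\cdots s_{\kappa-m}$ according to where the last strand of $w$ lands, with $w_2'\in S_{\kappa-1}$ the minimal-length factor, Proposition \ref{prop-key1} applies verbatim and yields $\#\mathcal{M}^{\chi}(T_{s_i},T_w,T_{w_3})=\#\mathcal{M}^{\chi-1}(T_{s_i},T_{w_2'},T_{w_3'})$ precisely when $w_3=w_3's_{\kappa-1}\cdots s_{\kappa-m}$, and $0$ otherwise. The essential observation is that the $\hbar$-weight is unchanged under the reindexing $\chi'=\chi-1$, since $\hbar^{\kappa-\chi}=\hbar^{(\kappa-1)-\chi'}$; thus the entire sum is exactly $T_{s_i}T_{w_2'}$ computed in $S_{\kappa-1}$, with each output $w_3'$ replaced by $w_3's_{\kappa-1}\cdots s_{\kappa-m}$. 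The inductive hypothesis evaluates $T_{s_i}T_{w_2'}$, and reattaching the last strand sends $T_{s_iw_2'}\mapsto T_{s_iw}$ and $T_{w_2'}\mapsto T_w$. One only needs that this reattachment preserves the sign of $l(s_iw)-l(w)$, which follows from the length-additivity of the parabolic coset factorization $w=w_2'\cdot(s_{\kappa-1}\cdots s_{\kappa-m})$: since $s_i\in S_{\kappa-1}$, we get $l(s_iw)-l(w)=l(s_iw_2')-l(w_2')$.

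Next I would treat $i=\kappa-1$, which carries the genuinely new content. Here $s_{\kappa-1}$ exchanges the last two elements, so $w_1=s_{\kappa-1}=w_1''s_{\kappa-1}$ with $w_1''=\operatorname{id}\in S_{\kappa-2}$, and Proposition \ref{prop-key2} applies. If $w$ does not itself begin by crossing the last two strands (Case (1)), then $l(s_{\kappa-1}w)>l(w)$ and only the $\chi-2$ branch survives; since $T_{w_1''}=T_{\operatorname{id}}$ is the identity by Corollary \ref{cor1}, the reduced count is $1$ exactly when $w_3''=w_2''$, which reassembles to $w_3=s_{\kappa-1}w$ and gives $T_{s_{\kappa-1}}T_w=T_{s_{\kappa-1}w}$. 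If instead $w$ begins with $s_{\kappa-1}$ (Case (2)), then $l(s_{\kappa-1}w)<l(w)$ and both branches of Proposition \ref{prop-key2}(2) contribute: the $\chi-2$ branch again produces $T_{s_{\kappa-1}w}$ with weight $\hbar^0$, while the $\chi-1$ branch carries exactly one extra power of $\hbar$ because $\hbar^{\kappa-\chi}=\hbar\cdot\hbar^{(\kappa-2)-(\chi-1)}$, and—using $T_{\operatorname{id}}$ as identity again—forces $w_3''=w_2''$, hence $w_3=w$. This gives precisely $T_{s_{\kappa-1}}T_w=T_{s_{\kappa-1}w}+\hbar T_w$, matching the Hecke relation.

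The main obstacle I anticipate is combinatorial rather than geometric: correctly matching the length dichotomy of the Hecke relation against the strand-position cases $t_1,t_2,r_1,r_2$ governing Propositions \ref{prop-key1} and \ref{prop-key2}, and checking that the single extra power of $\hbar$ in Case (2) is produced exactly once. Keeping the reindexings $\chi\mapsto\chi-1$ and $\chi\mapsto\chi-2$ synchronized with the strand drops $\kappa\mapsto\kappa-1$ and $\kappa\mapsto\kappa-2$, so that the total $\hbar$-weight is tracked faithfully, is the step requiring the most care; once this accounting is in place, the two propositions together with Corollary \ref{cor1} assemble into the stated relation with no further curve counting.
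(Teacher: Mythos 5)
Your proposal is correct and follows exactly the route the paper intends: the paper states Corollary \ref{cor2} as a ``direct consequence by inductively using the two propositions above,'' and your argument is precisely that induction, with the case split $i<\kappa-1$ versus $i=\kappa-1$ feeding into Propositions \ref{prop-key1} and \ref{prop-key2} respectively, the $\hbar$-weight reindexing $\hbar^{\kappa-\chi}=\hbar^{(\kappa-1)-(\chi-1)}$ (resp.\ $\hbar\cdot\hbar^{(\kappa-2)-(\chi-1)}$) tracked correctly, and the length dichotomy matched via additivity of length over the parabolic coset factorizations. No gaps; this is a faithful elaboration of the proof the paper leaves to the reader.
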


\begin{corollary} \label{cor3}
The generators $T_i$ satisfy the relations in the Hecke algebra:
    \begin{align}
        T_{i}^2&=1+\hbar T_i,\label{eq-x-x}\\
        T_iT_j&=T_jT_i\,\,\,\mathrm{for}\,\,\,|i-j|>1,\label{eq-2-commute}\\
        T_iT_{i+1}T_i&=T_{i+1}T_{i}T_{i+1}\label{eq-3-commute}.
    \end{align}
\end{corollary}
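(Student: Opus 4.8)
The plan is to derive the three Hecke relations \eqref{eq-x-x}--\eqref{eq-3-commute} purely formally from the multiplication rule of Corollary \ref{cor2} together with the fact that $T_{\operatorname{id}}$ is the unit (Corollary \ref{cor1}). Each relation is an equality between products of the simple generators $T_i = T_{s_i}$, so the strategy is to evaluate both sides by applying Corollary \ref{cor2} one factor at a time, reading off from the length function $l$ on $S_\kappa$ which of the two cases (the length-increasing case, giving $T_{s_iw}$; the length-decreasing case, giving $T_{s_iw} + \hbar T_w$) is in force. The only genuinely geometric input has already been absorbed into Corollary \ref{cor2}; what remains is bookkeeping with reduced words in $S_\kappa$.

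For the quadratic relation \eqref{eq-x-x}, I take $w = s_i$ in Corollary \ref{cor2}. Since $s_i s_i = \operatorname{id}$ we have $l(s_i \cdot s_i) = 0 < 1 = l(s_i)$, so the length-decreasing case applies and yields
\[
T_i^2 = T_i T_{s_i} = T_{\operatorname{id}} + \hbar\,T_{s_i} = 1 + \hbar\,T_i,
\]
using Corollary \ref{cor1} for the last equality.

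For the far commutativity \eqref{eq-2-commute}, note that when $|i-j|>1$ the transpositions $s_i$ and $s_j$ commute in $S_\kappa$ and $l(s_is_j) = 2 > 1 = l(s_j)$. Hence the length-increasing case of Corollary \ref{cor2} gives $T_iT_j = T_{s_is_j}$ and, symmetrically, $T_jT_i = T_{s_js_i}$; since $s_is_j = s_js_i$ as permutations these standard basis elements coincide. For the braid relation \eqref{eq-3-commute} I evaluate both sides by two successive applications of Corollary \ref{cor2}. On the left, $l(s_{i+1}s_i)=2>1$ gives $T_{i+1}T_i = T_{s_{i+1}s_i}$, and then $l(s_is_{i+1}s_i)=3>2$ gives $T_i T_{s_{i+1}s_i} = T_{s_is_{i+1}s_i}$; on the right, the same reasoning gives $T_{i+1}T_iT_{i+1} = T_{s_{i+1}s_is_{i+1}}$. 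Because every step is length-increasing, no $\hbar$-terms are produced, and the two outputs agree by the braid relation $s_is_{i+1}s_i = s_{i+1}s_is_{i+1}$ in $S_\kappa$.

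The essential point, and the only place where care is needed, is to confirm that each intermediate product above is again a standard basis element with the expected length, so that one stays in the length-increasing branch of Corollary \ref{cor2} throughout the commutativity and braid computations; this is immediate from the additivity of length along the reduced words $s_{i+1}s_i$ and $s_is_{i+1}s_i$. I do not expect a substantive obstacle here: the content of Theorem \ref{main-thm} lives in the curve counts of Propositions \ref{prop-key1} and \ref{prop-key2} that feed Corollary \ref{cor2}, and the present corollary is merely the formal translation of those counts into the defining presentation of $H_\kappa$.
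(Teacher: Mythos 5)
Your derivation is correct and is essentially what the paper intends: the paper offers no separate argument for Corollary \ref{cor3}, stating only that it (like Corollaries \ref{cor1} and \ref{cor2}) is a direct consequence of Propositions \ref{prop-key1} and \ref{prop-key2}, and your purely formal deduction from Corollary \ref{cor2} together with $T_{\operatorname{id}}=1$ is the natural way to make that precise. Note only that you have (correctly) read the conditions in Corollary \ref{cor2} as $l(s_iw)>l(w)$ and $l(s_iw)<l(w)$; as literally printed ($l(s_iw)>l(w)+1$ and $l(s_iw)<l(w)-1$) they are never satisfied, since $l(s_iw)=l(w)\pm 1$ for a simple reflection, so this is a typo in the paper rather than a gap in your argument.
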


\begin{proof}[Proof of Theorem \ref{main-thm}]
    Define a unital $\mathbb{Z}[\hbar]$-algebra map $\phi: H_{\kappa} \to \operatorname{End}(L^{\otimes \kappa})$ on the algebra generators by $\phi(\tilde{T}_i)=T_i$. 
    The map is well-defined by Corollary \ref{cor3}.
    The multiplication rules on $H_{\kappa}$ in (\ref{eq-hecke}) and that of $\operatorname{End}(L^{\otimes \kappa})$ in Corollary \ref{cor2} are the same. So we have $\phi(\tilde{T}_w)=T_w$ for all $w \in S_{\kappa}$.
\end{proof}

\printbibliography

\end{document}